\documentclass[12pt,reqno]{amsart}
\usepackage{amsmath, amsthm, amssymb}

\topmargin 1cm
\advance \topmargin by -\headheight
\advance \topmargin by -\headsep
     
\setlength{\paperheight}{270mm}%
\setlength{\paperwidth}{192mm}%
\textheight 22.5cm
\oddsidemargin 1cm
\evensidemargin \oddsidemargin
\marginparwidth 1.25cm
\textwidth 14cm
\setlength{\parskip}{0.05cm}

\newtheorem{theorem}{Theorem}[section]
\newtheorem{lemma}[theorem]{Lemma}

\theoremstyle{definition}

\theoremstyle{remark}

\numberwithin{equation}{section}

\def\dbN{{\mathbb N}}
\def\dbR{{\mathbb R}}
\def\dbZ{{\mathbb Z}}

\def\grB{{\mathfrak B}}
\def\grc{{\mathfrak c}}\def\grC{{\mathfrak C}}
\def\grD{{\mathfrak D}}

\def\grm{{\mathfrak m}}\def\grM{{\mathfrak M}}\def\grN{{\mathfrak N}}
\def\grn{{\mathfrak n}}
\def\grB{{\mathfrak B}}\def\grC{{\mathfrak C}}
\def\grK{{\mathfrak K}}

\def\alp{{\alpha}}  

\def\bet{{\beta}}

\def\Gam{{\Gamma}}
\def\del{{\delta}} \def\Del{{\Delta}}

\def\tet{{\theta}}

\def\sig{{\sigma}}  

\def\Ups{{\Upsilon}}

\def\d{{\partial}}
\def\eps{\varepsilon}

\def\le{\leqslant} \def\ge{\geqslant}

\def\d{{\,{\rm d}}}

\begin{document}
\title[Sums of two like powers]{Additive representation in short intervals, II:\\ Sums of two like powers}
\author[J\"org Br\"udern]{J\"org Br\"udern}
\address{Mathematisches Institut, Bunsenstrasse 3--5, D-37073 G\"ottingen, Germany}
\email{bruedern@uni-math.gwdg.de}
\author[Trevor D. Wooley]{Trevor D. Wooley}
\address{School of Mathematics, University of Bristol, University Walk, Clifton, Bristol BS8 1TW, United 
Kingdom}
\email{matdw@bristol.ac.uk}
\subjclass[2010]{11P05, 11P55}
\keywords{Sums of cubes, sums of $k$-th powers, Hardy-Littlewood method.}
\thanks{The first author acknowledges support by Deutsche Forschungsgemeinschaft. 
The second author is grateful for the support and excellent working conditions 
provided at Mathematisches Institut, G\"ottingen, through the Gauss Professorship of 
Akademie der Wissenschaften zu G\"ottingen, which greatly facilitated the preparation 
of this paper.}
\date{}
\begin{abstract} We establish that, for almost all natural numbers $N$, there is a sum of 
two positive integral cubes lying in the interval $[N-N^{7/18+\eps},N]$. Here, the 
exponent $7/18$ lies half way between the trivial exponent $4/9$ stemming from the 
greedy algorithm, and the exponent $1/3$ constrained by the number of integers 
not exceeding $X$ that can be represented as the sum of two positive integral cubes. We 
also provide analogous conclusions for sums of two positive integral $k$-th powers when 
$k\ge 4$. 
\end{abstract}
\maketitle

\section{Introduction} The sequence of integers $2=s_{k,1}<s_{k,2}<\ldots$ represented 
as the sum of two $k$-th powers of natural numbers is certainly sparse when $k\ge 3$, 
for a simple counting argument confirms that their number, $\nu_k(N)$, not exceeding 
$N$ is at most $O(N^{2/k})$. Investigations concerning $\nu_k(N)$ date at least as far 
back as the work of Erd\H os and Mahler \cite{Erd1939, EM1938}, which showed that 
$\nu_k(N)\gg N^{2/k}$. Hooley \cite{Hoo1963, Hoo1964, Hoo1981a, Hoo1981b, 
Hoo1996} has returned to the problem on numerous occasions, and when $h\ge 3$ has 
established the asymptotic formula
\begin{equation}\label{1.1}
\nu_h(N)=\frac{\Gam(1+1/h)^2}{2\Gam (1+2/h)}N^{2/h}+O(N^{5/(3h)+\eps}).
\end{equation}
This conclusion derives from the paucity of numbers that are represented as the sum of 
two $h$-th powers in two essentially distinct ways. Other scholars have augmented and 
refined Hooley's opera (see Greaves \cite{Gre1966, Gre1994}, Skinner and Wooley 
\cite{SW1995}, Wooley \cite{Woo1995}, Heath-Brown \cite{HB1997, HB2002}, Browning 
\cite{Bro2002}, Salberger \cite{Sal2008}). The distribution of such numbers in short 
intervals has, thus far, received little attention, although Daniel \cite{Dan1997} has 
considered the corresponding problem for sums of three positive integral cubes. In this 
memoir we remedy this situation.\par

Given a large integer $n$, one may subtract from $n$ the largest integral $k$-th power 
not exceeding $n$, leaving a remainder of size at most $kn^{1-1/k}$. By repeating this 
greedy algorithm, one finds that for all large $N$, there is a sum of two positive integral 
$k$-th powers 
between $N-k^2N^{\phi_k}$ and $N$, where $\phi_k=(1-1/k)^2$. The main result of 
this paper shows that the same conclusion remains valid, with a smaller exponent in place 
of $\phi_k$, for {\it almost all} natural numbers $N$. Denote by $E_k(N,Z)$ the number 
of natural numbers $N<n\le 2N$ for which the interval $(n,n+Z]$ contains no integer that 
is the sum of two positive integral $k$-th powers. When $k\ge 3$, we put
\begin{equation}\label{1.a}
\sig_k=\begin{cases} 2^{2-k},&\text{when $3\le k\le 7$,}\\
(2k^2-10k+12)^{-1},&\text{when $k\ge 8$,}\end{cases}
\end{equation}
and define
\begin{equation}\label{1.b}
\tet_k=1-\frac{2}{k}+\frac{1-\sig_k}{k^2}=\phi_k-\frac{\sig_k}{k^2}.
\end{equation}

\begin{theorem}\label{theorem1.1}
Suppose that $k\ge 3$. Then, whenever $Z\ge N^{\tet_k}$, one has
\begin{equation}\label{1.2}
E_k(N,Z)\ll N^{1+\tet_k+\eps}Z^{-1}.
\end{equation}
\end{theorem}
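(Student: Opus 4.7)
The plan is to apply the Hardy-Littlewood method combined with a second moment argument. Set $X=(2N)^{1/k}$ and introduce
\[
f(\alp) = \sum_{1\le x\le X}e(\alp x^k),\qquad K(\alp) = \sum_{1\le m\le Z}e(m\alp).
\]
Then
\[
R(n):=\int_0^1|f(\alp)|^2K(\alp)e(-n\alp)\d\alp
\]
counts representations $x^k+y^k=n+m$ with $x,y\in[1,X]$ and $m\in[1,Z]$, so $R(n)>0$ exactly when the interval $(n,n+Z]$ contains a sum of two positive integral $k$th powers. Dissect $[0,1]$ into Hardy-Littlewood major arcs $\grM$ and the complementary minor arcs $\grm$, writing $R(n)=R_\grM(n)+R_\grm(n)$.

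First I would treat the major arcs by the standard analysis: one shows $R_\grM(n)=\grS(n)J(n)+O(\text{error})$ with $J(n)\asymp ZN^{2/k-1}$ and singular series $\grS(n)\gg 1$ outside an exceptional set of cardinality $O(N^{1-\del})$, which is easily absorbed into $E_k(N,Z)$. For any surviving $n$ with $R(n)=0$ one necessarily has $|R_\grm(n)|\gg ZN^{2/k-1}$. Parseval combined with a Chebyshev bound then reduces the full theorem to the mean-square estimate
\[
\Psi := \int_\grm|f(\alp)|^4|K(\alp)|^2\d\alp \ll N^{2/k+(1-\sig_k)/k^2+\eps}Z,
\]
since the number of bad $n$ is at most $\Psi\cdot(ZN^{2/k-1})^{-2}\ll N^{1+\tet_k+\eps}Z^{-1}$, as desired.

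The heart of the proof lies in establishing this bound on $\Psi$. The relevant tools are Hooley's paucity estimate $\int_0^1|f(\alp)|^4\d\alp\ll X^{2+\eps}$; the $L^2$-identity
\[
\int_0^1|f(\alp)K(\alp)|^2\d\alp\ll XZ,
\]
valid because the restriction $|x^k-y^k|<Z$ forces the diagonal $x=y$ in the present range $Z\le X^{k-1}$; and the minor arc bound for $f$ of strength reflected by $\sig_k$, that is, classical Weyl-Hua-Vaughan estimates for $3\le k\le 7$ and Vinogradov mean value bounds via efficient congruencing for $k\ge 8$. The natural route is a H\"older interpolation
\[
\Psi \le \bigl(\sup_{\alp\in\grm}|f(\alp)|\bigr)^{2a}\int_0^1|f(\alp)|^{4-2a}|K(\alp)|^2\d\alp
\]
for a suitable $a\in[0,2]$ optimised to the $k$ in question; equivalently one works with the combinatorial expansion $\Psi=\sum_{|h|<Z}(Z-|h|)D_k(h)$, where $D_k(h)=\#\{(\bfx,\bfy)\in[1,X]^4:x_1^k+x_2^k-y_1^k-y_2^k=h\}$, with paucity handling the diagonal $D_k(0)\ll X^{2+\eps}$ and further circle method analysis addressing $h\ne 0$.

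The principal obstacle is the delicate balance of this last interpolation. The exponent $\tet_k=\phi_k-\sig_k/k^2$ lies precisely at the threshold at which paucity on the diagonal, the Weyl saving on the sup of $f$, and the $L^1$ mass $Z$ of $|K|^2$ in place of its $L^\infty$ size $Z^2$ combine to beat the trivial bound, and any weaker input at any stage erodes the saving $\sig_k/k^2$ that separates $\tet_k$ from $\phi_k$.
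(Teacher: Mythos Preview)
Your plan has a genuine gap at the minor arc step, and the missing idea is precisely the one the paper is built around: a \emph{diminishing range} for the second variable. You take both summands over $[1,X]$, whereas the paper writes $r(n;Z)$ with $x\in(X,2X]$ but $y\in(Y,2Y]$ for $Y=X^{1-(1-\sig_k)/k}$. This asymmetry is not cosmetic. Because $|y_1^k-y_2^k|+|z_1-z_2|\le (2Y)^k+Z$, the paper can replace $|f|^2$ in $\int|fgu|^2$ by the differenced sum
\[
F(\alp)=\sum_{|h|\le H}\sum_{x}e(h\Psi(x,h)\alp),\qquad H=2^kX^{\sig_k},
\]
with the crucial feature that $H$ is \emph{short}. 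It is this shortness that delivers the bound $F(\alp)\ll X^{1+\eps}$ on an auxiliary set of minor arcs $\grn$ (via Vaughan's lemma for $3\le k\le 7$ and Vinogradov-type estimates for $k\ge 8$), and then $\int_\grn F|gu|^2\ll X^{1+\eps}YZ$ follows from the trivial diagonal bound $\int|gu|^2\le YZ$. Without the diminishing range there is no reason for the difference $h=x_1-x_2$ to be restricted, and no analogue of this estimate is available.

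Your proposed H\"older interpolation does not recover this saving. Take $k=4$, so $\sig_4=\tfrac14$ and the target is $\Psi\ll X^{2+3/16+\eps}Z$. Pulling out $(\sup_\grm|f|)^2\ll X^{2-2\sig+\eps}$ with the best Weyl-type exponent $\sig=2^{1-k}=\tfrac18$ and then using $\int|fK|^2\ll XZ$ (even granting you the dyadic range needed to make this true) gives only $\Psi\ll X^{3-1/4+\eps}Z$, which misses by more than $X^{1/2}$; larger values of $a$ are worse, and taking $a=0$ gives the full mean value $\int_0^1|f|^4|K|^2$, whose off-diagonal contribution is of genuine size $X^{4-k}Z^2$, again too large once $Z\ge X^{k\tet_k}$. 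The situation does not improve for $k=3$: with $\sup_\grm|f|\ll X^{3/4+\eps}$ one gets $\Psi\ll X^{5/2+\eps}Z$ against a target of $X^{13/6+\eps}Z$. Hooley's paucity bound $\int|f|^4\ll X^{2+\eps}$ is a true and useful input, but it controls only the $h=0$ term in your expansion $\Psi=\sum_{|h|<Z}(Z-|h|)D_k(h)$; the off-diagonal terms are where the work lies, and that is exactly what the diminishing range handles.

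Two smaller points. First, your identity $\int_0^1|fK|^2\ll XZ$ is false for $f$ summed over $[1,X]$: take $x=1$, $y=2$ to see that $|x^k-y^k|<Z$ need not force $x=y$. You need a dyadic range, and the paper's choice of $(X,2X]$ together with $(Y,2Y]$ is what makes Lemma~\ref{lemma3.0} work. Second, your major arc claim ``$\grS(n)\gg 1$ outside a thin exceptional set'' is correct in effect but for the wrong reason: the short-interval sum $K$ annihilates every $q>1$ contribution to the singular series, so $\grS(n)=1$ identically and there is no arithmetic exceptional set at all. The paper makes this explicit by showing that $\grM^\dagger$ already contributes $O(X^{1+\eps}YZ)$ in mean square, so only the central interval $\grC$ survives.
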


Whereas the greedy algorithm ensures that $E_k(N,2k^2N^{\phi_k})\ll 1$, the conclusion 
of Theorem \ref{theorem1.1} yields the bound $E_k(N,N^{\phi_k-\del})=o(N)$ whenever 
$\del<\sig_k/k^2$. The spacing of sums of two $k$-th powers evident in the asymptotic 
formula (\ref{1.1}), meanwhile, implies that $E_k(N,Z)\gg N$ whenever $Z\le N^{1-2/k}$. 
It seems plausible that (\ref{1.2}) should remain valid provided only that $\tet_k>1-2/k$. 
Our estimate is particularly strong in the case $k=3$, where we show that for all 
$\eps>0$, and almost all $N\in \dbN$, there is a sum of two positive integral cubes lying 
between $N$ and $N+N^{7/18+\eps}$. Here, the exponent $7/18$ lies half way between 
the trivial exponent $4/9$ stemming from the greedy algorithm, and the exponent $1/3$ 
constrained by the asymptotic formula (\ref{1.1}).\par

The conclusion of Theorem \ref{theorem1.1} also delivers bounds for the size of the gaps 
between sums of two $k$-th powers in mean square.

\begin{theorem}\label{theorem1.2} When $k\ge 3$, one has
$$\sum_{s_{k,n}\le N}(s_{k,n+1}-s_{k,n})^2\ll N^{1+\tet_k+\eps}.$$
\end{theorem}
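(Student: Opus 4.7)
The plan is to convert the second moment of gaps into a weighted count of empty intervals, then apply Theorem \ref{theorem1.1}. Writing $g_n=s_{k,n+1}-s_{k,n}$, I will first reduce by dyadic decomposition to the task of proving
$$G(N):=\sum_{N<s_{k,n}\le 2N}g_n^2\ll N^{1+\tet_k+\eps},$$
since summing along $N=N_0 2^{-j}$ for $j\ge 0$ recovers the asserted bound up to a factor of $\log N_0$, which is absorbed into $N_0^\eps$. Throughout, by the greedy-algorithm bound stated in the introduction, $g_n\le k^2 N^{\phi_k}$ whenever $s_{k,n}\le 2N$.

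Next I split $G(N)$ at the threshold $Y=N^{\tet_k}$. For the small-gap contribution, the trivial identity $\sum_n g_n\le 2N$ gives
$$\sum_{g_n\le Y}g_n^2\le Y\sum_{g_n\le Y}g_n\le 2YN\ll N^{1+\tet_k}.$$
For the large-gap contribution I decompose dyadically: for each $Z=2^jY$ with $Y\le Z\le k^2N^{\phi_k}$ (so $O(\log N)$ scales), I bound the sum over $g_n\in (Z,2Z]$.

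The heart of the argument is the following double-counting observation. If $s_{k,n}\in (N,2N]$ and $g_n>Z$, then for every integer $m$ in the range $s_{k,n}\le m\le s_{k,n}+\lfloor Z/2\rfloor$ one has $m\in (N,3N]$ and $m+Z/2<s_{k,n}+Z<s_{k,n+1}$, so that $(m,m+Z/2]$ lies in the gap $(s_{k,n},s_{k,n+1})$ and contains no sum of two positive integral $k$-th powers. Counting such pairs $(n,m)$ in two ways, and noting that the number of $m\in (N,3N]$ for which $(m,m+Z/2]$ is empty is at most $E_k(N,Z/2)+E_k(2N,Z/2)$, one obtains
$$\lfloor Z/2\rfloor\cdot \#\{n:s_{k,n}\in (N,2N],\ g_n>Z\}\le E_k(N,Z/2)+E_k(2N,Z/2).$$
Since $Z/2\ge Y/2\ge \tfrac12 N^{\tet_k}$, Theorem \ref{theorem1.1} applies and yields
$$\#\{n:s_{k,n}\in (N,2N],\ g_n>Z\}\ll N^{1+\tet_k+\eps}Z^{-2}.$$
Multiplying by $(2Z)^2$, the maximum of $g_n^2$ in the dyadic range,
$$\sum_{\substack{N<s_{k,n}\le 2N\\ Z<g_n\le 2Z}}g_n^2\ll N^{1+\tet_k+\eps},$$
and summing over the $O(\log N)$ dyadic scales gives $G(N)\ll N^{1+\tet_k+\eps}$.

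No single step presents a serious obstacle; Theorem \ref{theorem1.1} is doing all the hard work. The only piece that requires genuine thought is the double-counting identity linking large gaps to empty short intervals, together with the slight widening of the range from $(N,2N]$ to $(N,3N]$ needed to accommodate gaps straddling $2N$ without losing in the boundary.
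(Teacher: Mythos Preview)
Your proof is correct and follows essentially the same route as the paper's: dyadically localise in $N$, convert a gap $g_n>Z$ into $\asymp Z$ integers $m$ for which $(m,m+Z/2]$ contains no sum of two $k$-th powers, invoke Theorem~\ref{theorem1.1} to bound the count of such $m$, and sum over dyadic $Z$. The only cosmetic wrinkle is that Theorem~\ref{theorem1.1} literally requires its second argument to be at least $N^{\tet_k}$ (respectively $(2N)^{\tet_k}$ for $E_k(2N,\cdot)$), so you should set your threshold $Y$ a fixed constant factor larger, say $Y=4(2N)^{\tet_k}$, to make the citation exact; this changes nothing else.
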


We note in particular that since (\ref{1.1}) shows that, for almost all $n\in \dbN$, one 
has $s_{k,n+1}-s_{k,n}\gg s_{k,n}^{1-2/k}$, then
$$\sum_{N/2<s_{k,n}\le N}(s_{k,n+1}-s_{k,n})^2\gg 
(N^{1-2/k})^2N^{2/k}=N^{2-2/k}.$$
This lower bound is expected to reflect the asymptotic behaviour of the mean square gap 
size estimated in Theorem \ref{theorem1.2}. Meanwhile, the bound 
\begin{equation}\label{1.4a}
s_{k,n+1}-s_{k,n}\ll s_{k,n}^{\phi_k},
\end{equation}
immediate from the greedy algorithm, yields the estimate
$$\sum_{s_{k,n}\le N}(s_{k,n+1}-s_{k,n})^2\ll N^{\phi_k}\sum_{s_{k,n}\le N}
(s_{k,n+1}-s_{k,n})\ll N^{1+\phi_k}.$$
In view of (\ref{1.b}), one has $2-2/k<1+\tet_k<1+\phi_k$, so that the conclusion of 
Theorem \ref{theorem1.2} improves on the trivial estimate, but falls short of the 
aforementioned expectation. In the case $k=3$, the exponent $1+\tet_3=25/18$ lies half 
way between the trivial and conjectured bounds.\par

In the above discussion, we have deliberately restricted attention to the situation in which 
$k\ge 3$. The behaviour of the sequence $(s_{2,n})$, consisting of sums of two squares, 
is quite different. We refer the reader to Friedlander \cite{Fri1982}, Harman 
\cite{Har1991}, Hooley \cite{Hoo1994} and Plaksin \cite{Pla1987, Pla1992} for a 
consideration of the distribution of gaps in this relatively dense sequence.\par

The exceptional set estimate presented in Theorem \ref{theorem1.1} is obtained by 
applying the Hardy-Littlewood (circle) method to the Diophantine equation
\begin{equation}\label{1.3}
x^k+y^k+z=n,
\end{equation}
with $z$ running over a short interval. By applying Bessel's inequality, one is led to 
consider a mean value estimate implicitly related to the number of integral solutions of the 
equation
\begin{equation}\label{1.4}
x_1^k-x_2^k=y_1^k-y_2^k+z_1-z_2,
\end{equation}
with $x_i$ and $y_i$ bounded above by $n^{1/k}$, and with $z_i$ in the same short 
interval. Aficionados of the circle method will recognise the potential for applying 
arguments based on the use of diminishing ranges, in which the variables $y_i$ are 
constrained to lie in a slightly shortened interval. Two obstacles prevent a pedestrian 
treatment of this problem. First, one must apply diminishing ranges in a treatment 
restricted to minor arcs only. Also, one has the second challenge of handling a problem in 
which the number of variables is very small. Methods pursued in the first of this series of 
papers \cite{BW2004} may be adapted to surmount the first of these difficulties (see also 
\cite{Bru2001} and \cite{Vau1986c} for earlier such treatments). Meanwhile, the second 
may be overcome by solving a long sequence of pruning exercises, all within range of the 
accomplished practitioner of such methods.\par

In this paper, we adopt the convention that whenever $\eps$ appears in a statement, 
either implicitly or explicitly, then the statement holds for each $\eps>0$. Implicit 
constants in the notations of Landau and Vinogradov will depend at most on $\eps$ and 
$k$. Finally, write $\|\tet\|=\underset{y\in\dbZ}{\min}|\tet-y|$ and $e(z)$ for 
$e^{2\pi i z}$.

\section{Infrastructure} We begin by introducing the notation and cast of generating 
functions required to describe our method. We consider a fixed integer $k$ with $k\ge 3$, 
and we define $\sig=\sig_k$ and $\tet=\tet_k$ as in (\ref{1.a}) and (\ref{1.b}). Let $N$ 
be a sufficiently large positive number, and define
\begin{equation}\label{2.1}
X=(N/3)^{1/k},\quad Y=X^{1-(1-\sig)/k},\quad H=2^kX^\sig \quad \text{and}\quad 
Q=X^{1-\sig/k}.
\end{equation}
Also, we consider a real number $Z$ with
\begin{equation}\label{2.2}
X^{k\tet}\le Z\le 6k^2X^{k-2+1/k}.
\end{equation}
Let $r(n;Z)$ be the number of integral solutions of the equation (\ref{1.3}) with 
$X<x\le 2X$, $Y<y\le 2Y$ and $1\le z\le Z$. Our goal is an estimate for the quantity
\begin{equation}\label{2.5}
\Ups (N,Z)=\sum_{N<n\le 2N}\left|r(n;Z)-k^{-1}n^{-1+1/k}YZ\right|^2.
\end{equation}

\par We bound $\Ups(N,Z)$ through the medium of the Hardy-Littlewood method. The 
exponential sums required in this enterprise are
\begin{equation}\label{2.6}
f(\alp)=\sum_{X<x\le 2X}e(\alp x^k),\quad g(\alp)=\sum_{Y<y\le 2Y}e(\alp y^k),\quad 
u(\alp)=\sum_{1\le z\le Z}e(\alp z).
\end{equation}
It will be expedient on numerous occasions to suppress the argument $\alp$ from these 
notations as an aid to exposition and concision. Thus $f(\alp)$ may be abbreviated to $f$, 
for example. By orthogonality, one has
\begin{equation}\label{2.7}
r(n;Z)=\int_0^1f(\alp)g(\alp)u(\alp)e(-n\alp)\d \alp,
\end{equation}
the relation which provides the starting point for our analysis of $\Ups(N,Z)$. With $Q$ 
defined as in (\ref{2.1}), we write $\grM$ for the union of the intervals
$$\grM(q,a)=\{ \alp \in [0,1):|q\alp-a|\le QX^{-k}\},$$
with $0\le a\le q\le Q$ and $(a,q)=1$. Also, we denote by $\grM^\dagger$ the 
corresponding union of the intervals $\grM(q,a)$ in which $q>1$. Further, we put 
$\grm=[0,1)\setminus \grM$. When $\grB\subseteq [0,1)$ is measurable, we write
$$r_\grB (n;Z)=\int_\grB f(\alp)g(\alp)u(\alp)e(-n\alp)\d\alp .$$
Thus, in view of (\ref{2.7}), we have
\begin{equation}\label{2.8}
r(n;Z)=r_\grM(n;Z)+r_\grm(n;Z).
\end{equation}

\par We next introduce the quantities
$$\Ups_\grm=\sum_{N<n\le 2N}|r_\grm(n)|^2\quad \text{and}\quad 
\Ups_\grM=\sum_{N<n\le 2N}|r_\grM(n;Z)-k^{-1}n^{-1+1/k}YZ|^2.$$
Substituting (\ref{2.8}) into (\ref{2.5}), we thus arrive at the estimate
\begin{equation}\label{2.9}
\Ups(N,Z)\le 2(\Ups_\grm+\Ups_\grM).
\end{equation}
We estimate the contribution of $\Ups_\grM$ in \S3, deferring the consideration of 
$\Ups_\grm$ to \S\S4 and 5.

\section{The collapse of the major arcs}
We set about the task of replacing the generating functions $f$ and $u$ by their natural 
major arc approximants. We write
\begin{equation}\label{3.aa}
S(q,a)=\sum_{r=1}^qe(ar^k/q)\quad \text{and}\quad V(\bet;P)=
\sum_{P^k<x\le (2P)^k}k^{-1}x^{-1+1/k}e(\bet x),
\end{equation}
and put $v(\bet)=V(\bet;X)$ and $w(\bet)=V(\bet;Y)$. Next, we define the function 
$f^*(\alp)$ for $\alp \in \grM(q,a)\subseteq \grM$ by putting
$$f^*(\alp)=q^{-1}S(q,a)v(\alp-a/q),$$
and we set $f^*(\alp)=0$ for $\alp\in \grm$. Also, we define
\begin{equation}\label{3.2}
u^*(\alp)=\begin{cases} u(\alp),&\text{when $\|\alp\|\le QX^{-k}$,}\\
0,&\text{otherwise.}
\end{cases}
\end{equation}
We record for future reference an estimate of use in replacing $f(\alp)$ by $f^*(\alp)$ 
when $\alp\in \grM$, with a similar estimate concerning $u(\alp)$ and $u^*(\alp)$.

\begin{lemma}\label{lemma3.1} When $\alp\in \grM$, one has
$$f(\alp)-f^*(\alp)\ll Q^{1/2+\eps}\quad \text{and}\quad u(\alp)-u^*(\alp)\ll Q.$$
\end{lemma}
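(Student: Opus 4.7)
For the first bound: fix $\alp\in\grM(q,a)$ and write $\alp=a/q+\bet$ with $|\bet|\le QX^{-k}/q$. The plan is to classify $X<x\le 2X$ by the residue $s=x\bmod q$ and invoke the sharp major-arc approximation (via Poisson summation together with the standard bound $|S(q,a)|\ll q^{1/2+\eps}$), yielding
\[
f(\alp)-q^{-1}S(q,a)\int_X^{2X}e(\bet t^k)\,dt\ll q^{1/2+\eps}\bigl(1+X^k|\bet|\bigr)^{1/2}.
\]
On $\grM(q,a)$ one has $X^k|\bet|\le Q/q$, so the right-hand side is $\ll q^\eps(q+Q)^{1/2}\ll Q^{1/2+\eps}$. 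It remains to pass from the integral $\int_X^{2X}e(\bet t^k)\,dt$ to the sum $v(\bet)$. Substituting $m=t^k$ recasts this integral as $\int_{X^k}^{(2X)^k}k^{-1}m^{-1+1/k}e(\bet m)\,dm$, and a direct Euler--Maclaurin comparison with $v(\bet)$ produces discrepancy of order $1+X|\bet|$, which is $O(1)$ given $|\bet|\le QX^{-k}$ and $k\ge 3$. Multiplied by the Gauss sum prefactor $q^{-1}S(q,a)\ll q^{-1/2+\eps}$, this correction is absorbed into the stated bound.

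For $u-u^*$ the argument is case analysis. If $\alp\in\grM(q,a)$ with $q=1$, then $\|\alp\|\le QX^{-k}$, so by definition $u^*(\alp)=u(\alp)$ and the difference vanishes. Otherwise $q\ge 2$ and $(a,q)=1$ force $1\le a\le q-1$, so $a/q$ is separated from $\{0,1\}$ by at least $1/q$. Combined with $|\alp-a/q|\le QX^{-k}/q$ and the inequality $QX^{-k}\le\tfrac12$ (valid for large $X$ by (\ref{2.1})), this yields $\|\alp\|\ge 1/(2q)$. Since $2qQ\le 2Q^2<X^k$ for $k\ge 3$, one has $\|\alp\|>QX^{-k}$, whence $u^*(\alp)=0$, while the elementary geometric sum bound $|u(\alp)|\le\min(Z,(2\|\alp\|)^{-1})\le 2q\le 2Q$ closes the case.

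The only step demanding genuine work is the sharp major-arc approximation for $f$ with error $q^{1/2+\eps}(1+X^k|\bet|)^{1/2}$; the cruder approximation that emerges from naive Euler--Maclaurin alone only delivers the weaker error $q(1+X^k|\bet|)\ll Q$, which would cost a factor $Q^{1/2-\eps}$ in later estimates. The remaining ingredients (the substitution $m=t^k$, Euler--Maclaurin comparison, and the trivial geometric sum bound) are routine.
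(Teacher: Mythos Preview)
Your argument is correct. For $f$, you are essentially unpacking the content of \cite[Theorem 4.1]{Vau1997}, which the paper simply cites; the integral-to-sum passage you describe is exactly how $v(\bet)$ arises in that theorem, so there is no real difference in approach.

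For $u$, your route is genuinely different from the paper's. The paper proceeds by partial summation to obtain the formula
\[
u(\bet+a/q)=q^{-1}\biggl(\sum_{r=1}^q e(ar/q)\biggr)u(\bet)+O\bigl(q(1+Z|\bet|)\bigr),
\]
and then uses the vanishing of the complete linear sum for $q>1$ to conclude $u(\alp)\ll Q$ directly. You instead show that $\|\alp\|\ge 1/(2q)$ whenever $q\ge 2$, deduce $u^*(\alp)=0$, and invoke the elementary geometric-series bound $|u(\alp)|\le (2\|\alp\|)^{-1}\le q\le Q$. Your argument is shorter and more elementary. The paper's approach has the minor advantage that the displayed partial-summation formula is reused later (see the derivation of (\ref{4.9}) in the proof of Lemma~\ref{lemma4.2}), though your lower bound on $\|\alp\|$ would serve equally well there.
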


\begin{proof} The claim concerning $f$ is immediate from \cite[Theorem 4.1]{Vau1997}. 
Meanwhile, from the relation
$$u(a/q)=\sum_{r=1}^qe(ar/q)\left( Z/q+O(1)\right),$$
valid for $a\in \dbZ$ and $q\in \dbN$, it follows via partial summation that
\begin{equation}\label{3.2a}
u(\bet+a/q)=q^{-1}\biggl( \sum_{r=1}^qe(ar/q)\biggr) u(\bet)+O\left( q(1+Z|\bet|)
\right) .
\end{equation}
A similar argument is employed in the proof of \cite[Lemma 2.7]{Vau1997}. When 
$q>1$ and $(a,q)=1$, one has
$$\sum_{r=1}^qe(ar/q)=0.$$
Thus, when $\alp\in \grM(q,a)\subseteq \grM$ with $q>1$, one deduces that
$$u(\alp)\ll q+Z|q\alp-a|\ll Q+ZQX^{-k}\ll Q.$$
When $\alp\in \grM(q,a)\subseteq \grM$ with $q=1$, meanwhile, one has 
$\|\alp\|\le QX^{-k}$, and hence $u(\alp)=u^*(\alp)$. Thus, in any case, we have 
$u(\alp)-u^*(\alp)\ll Q$, and the proof of the lemma is complete.
\end{proof}

We continue with an auxiliary mean value estimate. Write
\begin{equation}\label{3.4}
I_1=\int_0^1|g(\alp)u(\alp)|^2\d\alp .
\end{equation}

\begin{lemma}\label{lemma3.0}
One has $I_1\le YZ$.
\end{lemma}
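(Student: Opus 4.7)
The plan is to interpret $I_1$ as a counting problem via orthogonality and exploit the fact that $Z$ is comfortably smaller than the natural gap between consecutive $k$-th powers in $(Y,2Y]$, so that only a diagonal configuration contributes. From (\ref{2.6}) and orthogonality,
\begin{equation*}
I_1 = \int_0^1 |g(\alp)|^2 |u(\alp)|^2\d\alp
\end{equation*}
equals the number of integral quadruples $(y_1,y_2,z_1,z_2)$ satisfying $y_1^k+z_1=y_2^k+z_2$, subject to $Y<y_i\le 2Y$ and $1\le z_i\le Z$. I would split this count according to whether $y_1=y_2$ or $y_1\ne y_2$.

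The diagonal contribution, $y_1=y_2$, forces $z_1=z_2$, so the number of such quadruples is at most the product of the number of admissible $y$ with the number of admissible $z$, which is at most $YZ$. For the off-diagonal contribution, I would use the factorisation
\begin{equation*}
y_1^k-y_2^k=(y_1-y_2)\bigl(y_1^{k-1}+y_1^{k-2}y_2+\cdots+y_2^{k-1}\bigr),
\end{equation*}
combined with $y_i>Y$ and $|y_1-y_2|\ge 1$, to deduce $|y_1^k-y_2^k|>kY^{k-1}$. Since $|z_2-z_1|<Z$, any off-diagonal solution would force $Z>kY^{k-1}$.

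It therefore remains to verify $Z\le kY^{k-1}$ under the hypotheses (\ref{2.1})--(\ref{2.2}). A direct computation using $Y=X^{1-(1-\sig)/k}$ gives $kY^{k-1}=kX^{(k-1)(k-1+\sig)/k}$, while the upper bound $Z\le 6k^2X^{k-2+1/k}=6k^2X^{(k-1)^2/k}$ holds by (\ref{2.2}). The exponent gap is $(k-1)\sig/k>0$, so provided $N$, and hence $X$, is sufficiently large, one has $kY^{k-1}>Z$ with ample room. I do not anticipate any real obstacle here: the parameter ranges in (\ref{2.1})--(\ref{2.2}) have been tuned precisely so that this positive power saving survives, after which the diagonal bound $YZ$ follows immediately.
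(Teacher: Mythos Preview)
Your proof is correct and follows essentially the same approach as the paper: interpret $I_1$ via orthogonality as a count of solutions to $y_1^k-y_2^k=z_2-z_1$, note that for $y_1\ne y_2$ one has $|y_1^k-y_2^k|\ge kY^{k-1}>Z$, and conclude that only the diagonal contributes, giving at most $YZ$. The paper states the inequality $kY^{k-1}>Z$ without elaboration, whereas you spell out the exponent comparison explicitly, but the substance is identical.
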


\begin{proof} By orthogonality, we see that $I_1$ counts the number of integral solutions 
of the equation $y_1^k-y_2^k=z_1-z_2$, with $Y<y_1,y_2\le 2Y$ and 
$1\le z_1,z_2\le Z$. When $y_1\ne y_2$, one has $|y_1^k-y_2^k|\ge kY^{k-1}>Z$. The 
only solutions of this equation counted by $I_1$ consequently satisfy $y_1=y_2$, whence 
$I_1\le YZ$. This completes the proof of the lemma.
\end{proof}

We are now equipped to pursue the replacement process.

\begin{lemma}\label{lemma3.2}
One has
\begin{equation}\label{3.3a}
\int_\grM |(f-f^*)gu|^2\d\alp \ll XYZ\quad \text{and}\quad \int_{\grM^\dagger}
|f^*gu|^2\d\alp \ll X^{1+\eps}YZ.
\end{equation}
\end{lemma}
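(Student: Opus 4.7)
The first estimate is immediate from the earlier lemmas: Lemma~\ref{lemma3.1} yields $|f-f^*|^2\ll Q^{1+\eps}$ uniformly on $\grM$, and factoring this bound out of the integral and invoking Lemma~\ref{lemma3.0} delivers $Q^{1+\eps}YZ$, which is dominated by $XYZ$ in view of $Q=X^{1-\sig/k}$.

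For the second estimate, my plan is to harvest cancellation from both $u$ and $g$ on any $\grM(q,a)\subseteq\grM^\dagger$, where necessarily $q\ge 2$. First, the orthogonality $\sum_{r=1}^{q}e(ar/q)=0$ in formula (\ref{3.2a}), coupled with the bound $Z|\bet|\le ZQX^{-k}\ll X^{-1+(1-\sig)/k}=o(1)$ that follows from the range (\ref{2.2}) of $Z$, yields the pointwise estimate $|u(\alp)|\ll q$ throughout $\grM^\dagger$. Next, since $q\le Q\le Y^{k-1}$ and $|\bet|\le Q/(qX^k)\le 1/(qY)$, the major arc approximation of \cite[Theorem~4.1]{Vau1997} applied to $g$ gives
$$g(\alp)=q^{-1}S(q,a)w(\bet)+O(q^{1/2+\eps}),$$
which, combined with the classical Weyl sum bound $|S(q,a)|^2\ll q^{2-2/k+\eps}$, produces $|g(\alp)|^2\ll q^{-2/k+\eps}|w(\bet)|^2+q^{1+\eps}$. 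Together with the matching bound $|f^*(\alp)|^2\ll q^{-2/k+\eps}|v(\bet)|^2$ and the $u$-estimate just obtained, the integrand of interest splits as
$$|f^*gu|^2\ll q^{2-4/k+\eps}|v|^2|w|^2+q^{3-2/k+\eps}|v|^2.$$

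I would then integrate each of the two pieces over $|\bet|\le Q/(qX^k)$ against the familiar mean value bounds $\int|v|^2|w|^2\,\d\bet\ll X^{2-k}Y^2$ and $\int|v|^2\,\d\bet\ll X^{2-k}$, then sum over the $\phi(q)$ admissible residues $a$ (producing a factor of order $q$) and finally over $1<q\le Q$. This yields the overall estimate
$$\int_{\grM^\dagger}|f^*gu|^2\,\d\alp\ll X^{2-k}Y^2Q^{4-4/k+\eps}+X^{2-k}Q^{5-2/k+\eps}.$$

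The tightest point of the argument is the final verification that both terms are $\ll X^{1+\eps}YZ$. Since the hypothesis $Z\ge X^{k\tet}$ combined with the formula for $Y$ gives $YZ\ge X^{k-1}$, this reduces to the elementary inequalities
$$8-2k\le (6+2\sig)/k-4\sig/k^2\quad\text{and}\quad 7-2k\le (2+5\sig)/k-2\sig/k^2.$$
Both are comfortable for $k\ge 4$, where the left-hand sides are already non-positive, but they become genuinely tight in the critical case $k=3$, $\sig=\tfrac12$, where the respective margins narrow to only $1/9$ and $7/18$. This is the main obstacle I anticipate, and the calibration of $\sig$ in (\ref{1.a}) is designed precisely to leave enough slack here.
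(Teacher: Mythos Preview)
Your treatment of the first estimate matches the paper. For the second, your route diverges from the paper's, which simply extracts the uniform bound $|u|\ll Q$ on $\grM^\dagger$ and then estimates $\int_\grM|f^*g|^2\d\alp$ via H\"older's inequality, invoking the moment bound $\int_\grM|f^*|^{k+1}\d\alp\ll X^{1+\eps}$ and Hua's lemma $\int_0^1|g|^4\d\alp\ll Y^{2+\eps}$. Your arc-by-arc pointwise method, exploiting the sharper bound $|u|\ll q$ and major arc approximations for both $f^*$ and $g$, is a legitimate alternative that dispenses with Hua and the $(k+1)$-th moment; it trades conceptual economy for more explicit bookkeeping, and in fact leaves more slack at $k=3$ (the paper's inequality is exactly tight there).

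There is, however, one genuine slip. Your claimed error $O(q^{1/2+\eps})$ in the approximation for $g$ is not what \cite[Theorem~4.1]{Vau1997} delivers: the correct bound is $O\bigl(q^{1/2+\eps}(1+Y^k|\bet|)^{1/2}\bigr)$, and your hypotheses $q\le Y^{k-1}$ and $|\bet|\le 1/(qY)$ only give $Y^k|\bet|\le Y^{k-1}/q$, which for small $q$ is of order $X^{\sig(k-1)/k}$ (about $X^{1/3}$ when $k=3$). Fortunately this does not damage the outcome. The second piece of your integrand becomes $q^{3-2/k+\eps}|v(\bet)|^2(1+Y^k|\bet|)$, and since $|v(\bet)|^2\ll X^2(1+X^k|\bet|)^{-2}$ with $Y^k\le X^k$, one checks that the integral over $|\bet|\le Q/(qX^k)$ is still $\ll X^{2-k}$: the extra contribution is $O(Y^kX^{2-2k}\log X)=O(X^{1-k+\sig+\eps})$, smaller than $X^{2-k}$ by a factor $X^{\sig-1+\eps}$. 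Your two final inequalities therefore survive intact once this correction is made.
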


\begin{proof} An application of Lemma \ref{lemma3.1} leads from (\ref{3.4}) via 
Lemma \ref{lemma3.0} to the estimate
$$\int_\grM |(f-f^*)gu|^2\d\alp \ll Q^{1+\eps}I_1\ll XYZ,$$
confirming the first bound of (\ref{3.3a}).\par

For the second bound we must work harder. Note that, from (\ref{3.2}), one has 
$u^*(\alp)=0$ for $\alp\in \grM^\dagger$. Hence we deduce from 
Lemma \ref{lemma3.1} that
$$\int_{\grM^\dagger}|f^*gu|^2\d\alp \ll Q^{2+\eps}\int_\grM |f^*g|^2\d\alp .$$
An application of H\"older's inequality shows that
$$\int_\grM|f^*g|^2\d\alp \le \biggl( \int_\grM|f^*|^{k+1}\d\alp \biggr)^{2/(k+1)}
\biggl( \int_0^1|g|^4\d\alp \biggr)^{1/2}.$$
The first integral on the right hand side here may be estimated through the methods of 
\cite[Chapter 4]{Vau1997} (see, in particular, \cite[Lemmata 4.9 and 6.2]{Vau1997}), and 
the second integral via Hua's lemma (see \cite[Lemma 2.5]{Vau1997}). Thus
\begin{align}
\int_{\grM^\dagger}|f^*gu|^2\d\alp &\ll Q^{2+\eps}(X^{1+\eps})^{2/(k+1)}
(Y^{2+\eps})^{1/2}\notag \\
&\ll X^{1+2\eps}YZ(Q^2X^{-1+2/(k+1)}Z^{-1}).\label{3.6}
\end{align}
Since $k\ge 3+2/(k+1)-(1+\sig)/k$ when $k\ge 3$, it follows that
$$k-2+(1-\sig)/k\ge 2/(k+1)-1+2(1-\sig/k),$$
so that in view of (\ref{1.b}), (\ref{2.1}) and (\ref{2.2}), the parenthetic factor on the 
right hand side of (\ref{3.6}) is at most $1$. This confirms the second bound of 
(\ref{3.3a}) and completes the proof of the lemma.
\end{proof}

We combine the two estimates of Lemma \ref{lemma3.2} in the next lemma.

\begin{lemma}\label{lemma3.3}
One has
$$\int_{\grM^\dagger}|fgu|^2\d\alp \ll X^{1+\eps}YZ.$$
\end{lemma}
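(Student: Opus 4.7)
The plan is to decompose $f$ as $f = f^* + (f - f^*)$ and apply the elementary inequality $|fgu|^2 \le 2|f^*gu|^2 + 2|(f-f^*)gu|^2$ pointwise on $\grM^\dagger$. Integrating over $\grM^\dagger$ then bounds the target integral by the sum of two integrals, each of which is controlled by Lemma \ref{lemma3.2}.

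First, I would apply the second bound of Lemma \ref{lemma3.2} directly to obtain
$$\int_{\grM^\dagger}|f^*gu|^2\d\alp \ll X^{1+\eps}YZ.$$
Next, since $\grM^\dagger \subseteq \grM$, the first bound of Lemma \ref{lemma3.2} yields
$$\int_{\grM^\dagger}|(f-f^*)gu|^2\d\alp \le \int_\grM|(f-f^*)gu|^2\d\alp \ll XYZ.$$
Combining these via the pointwise inequality above gives
$$\int_{\grM^\dagger}|fgu|^2\d\alp \ll X^{1+\eps}YZ + XYZ \ll X^{1+\eps}YZ,$$
as required.

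There is no real obstacle here: all the work has already been done in Lemma \ref{lemma3.2}, with the delicate step being the bound on $\int_{\grM^\dagger}|f^*gu|^2\d\alp$ which exploited the vanishing of $u^*$ on $\grM^\dagger$ together with the Lemma \ref{lemma3.1} estimate $u - u^* \ll Q$. Lemma \ref{lemma3.3} is a convenient repackaging of these two estimates into a single bound on the unmodified integrand $|fgu|^2$ over $\grM^\dagger$, which is the form needed in subsequent pruning arguments for $\Ups_\grM$ and $\Ups_\grm$.
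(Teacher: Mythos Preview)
Your proposal is correct and matches the paper's proof essentially verbatim: the paper likewise uses the elementary inequality $|f|^2\ll |f-f^*|^2+|f^*|^2$, bounds the $(f-f^*)$ integral over $\grM^\dagger$ by that over $\grM$, and then invokes the two estimates of Lemma~\ref{lemma3.2}.
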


\begin{proof} The elementary inequality $|f|^2\ll |f-f^*|^2+|f^*|^2$ implies that
$$\int_{\grM^\dagger}|fgu|^2\d\alp \ll \int_\grM |(f-f^*)gu|^2\d\alp 
+\int_{\grM^\dagger}|f^*gu|^2\d\alp ,$$
and the desired conclusion is now immediate from Lemma \ref{lemma3.2}.
\end{proof}

We define the central interval $\grC =[-QX^{-k},QX^{-k}]$, and note that
$$r_\grM(n;Z)=r_{\grM^\dagger}(n;Z)+r_\grC(n;Z).$$
It is useful to observe that when $\alp\in \grC$, one has $f^*(\alp)=v(\alp)$. Next, put
$$\rho_1(n;Z)=\int_\grC v(\alp)g(\alp)u(\alp)e(-n\alp)\d\alp .$$
Since $\grC\subseteq \grM+\dbZ$, an application of Bessel's inequality leads us via 
Lemma \ref{lemma3.2} to the bound
\begin{equation}\label{3.8}
\sum_{N<n\le 2N}|r_\grC(n;Z)-\rho_1(n;Z)|^2\le \int_\grC |(f-f^*)gu|^2\d\alp \ll XYZ.
\end{equation}
Likewise, we deduce via Lemma \ref{lemma3.3} that
\begin{equation}\label{3.9}
\sum_{N<n\le 2N}|r_{\grM^\dagger}(n;Z)|^2\le \int_{\grM^\dagger}|fgu|^2\d\alp \ll 
X^{1+\eps}YZ.
\end{equation}

\par The singular integral is
\begin{equation}\label{3.10}
\rho_2(n;Z)=\int_{-1/2}^{1/2}v(\alp)g(\alp)u(\alp)e(-n\alp)\d\alp ,
\end{equation}
and we next compare this expression to $\rho_1(n;Z)$.

\begin{lemma}\label{lemma3.4}
One has
$$\sum_{N<n\le 2N}|\rho_1(n;Z)-\rho_2(n;Z)|^2\ll XYZ.$$
\end{lemma}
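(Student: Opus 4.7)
The plan is to express the difference $\rho_2(n;Z)-\rho_1(n;Z)$ as the $n$-th Fourier coefficient of $vgu$ restricted to the complement $\grC^c:=[-1/2,1/2]\setminus \grC$ of the central interval, then invoke Bessel's inequality and bound the resulting $L^2$ integral by pairing a pointwise upper bound for $v(\alp)$ on $\grC^c$ with the trivial mean square estimate of Lemma \ref{lemma3.0}. From (\ref{3.10}) and the definition of $\rho_1$,
$$\rho_2(n;Z)-\rho_1(n;Z)=\int_{\grC^c}v(\alp)g(\alp)u(\alp)e(-n\alp)\d\alp ,$$
whence Bessel's inequality supplies
$$\sum_{N<n\le 2N}|\rho_1(n;Z)-\rho_2(n;Z)|^2\le \int_{\grC^c}|v(\alp)|^2|g(\alp)u(\alp)|^2\d\alp .$$

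To estimate $v$ on $\grC^c$, I would appeal to the standard bound
$v(\alp)\ll \min\bigl(X,(|\alp|X^{k-1})^{-1}\bigr),$
which follows from (\ref{3.aa}) via partial summation and integration by parts on the resulting smooth integral $\int_X^{2X}e(\alp y^k)\d y$ (compare the treatment of $V(\bet;P)$ in \cite{Vau1997}). Since the upper bound is a monotonically decreasing function of $|\alp|$ and $|\alp|>QX^{-k}$ throughout $\grC^c$, one obtains the uniform estimate $|v(\alp)|\ll X/Q$ on $\grC^c$. Combining this with Lemma \ref{lemma3.0} yields
$$\int_{\grC^c}|v|^2|gu|^2\d\alp \ll (X/Q)^2\int_0^1|gu|^2\d\alp \le (X/Q)^2YZ.$$

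By (\ref{2.1}) one has $(X/Q)^2=X^{2\sig/k}$, and an inspection of (\ref{1.a}) confirms that $2\sig_k\le k$ for every $k\ge 3$; accordingly $(X/Q)^2\le X$, and the desired bound $\int_{\grC^c}|vgu|^2\d\alp \ll XYZ$ follows. The only mildly delicate point in this programme is securing the quantitative pointwise bound for $v(\alp)$ uniformly on $\grC^c$, since the trivial estimate $v(\alp)\ll X$ would be insufficient; but this is a well-worn application of integration by parts. Once the pointwise bound is in hand, the argument collapses into a clean pairing of Bessel's inequality with Lemma \ref{lemma3.0}, requiring no further pruning or auxiliary mean value estimates.
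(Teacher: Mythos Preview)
Your proof is correct and follows essentially the same line as the paper: both apply Bessel's inequality to reduce to $\int_{\grC^c}|vgu|^2\d\alp$ and then exploit the standard decay $v(\alp)\ll X(1+X^k|\alp|)^{-1}$ on the complement of $\grC$. The only tactical difference is that the paper bounds $|g|\le Y$ and $|u|\le Z$ trivially and integrates $(1+X^k\alp)^{-2}$ directly to obtain the factor $X^{1-k}YZQ^{-1}\le 1$, whereas you pull out $\sup_{\grC^c}|v|^2\ll (X/Q)^2$ and invoke Lemma~\ref{lemma3.0} for the remaining mean square; both routes are equally short and yield the same bound.
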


\begin{proof} An application of Bessel's inequality conveys us from (\ref{3.10}) via 
\cite[Lemma 6.2]{Vau1997} to the bound
\begin{align*}
\sum_{N<n\le 2N}|\rho_1(n;Z)-\rho_2(n;Z)|^2&\ll \int_{QX^{-k}}^{1/2}
|v(\alp)g(\alp)u(\alp)|^2\d\alp \\
&\ll (XYZ)^2\int_{QX^{-k}}^{1/2}(1+X^k\alp)^{-2}\d\alp .
\end{align*}
Thus we conclude that
$$\sum_{N<n\le 2N}|\rho_1(n;Z)-\rho_2(n;Z)|^2\ll XYZ(X^{1-k}YZQ^{-1}).$$
Since $Q\ge Y$ and $Z\le X^{k-1}$, the parenthetic factor on the right hand side here 
does not exceed $1$, and so the proof of the lemma is complete.
\end{proof}

The singular integral may be evaluated with an error acceptable in mean square.

\begin{lemma}\label{lemma3.5}
One has
$$\sum_{N<n\le 2N}\left|\rho_2(n;Z)-k^{-1}YZn^{-1+1/k}\right|^2\ll XYZ.$$
\end{lemma}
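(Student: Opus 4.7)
The strategy for Lemma \ref{lemma3.5} is to convert $\rho_2(n;Z)$ into an explicit arithmetic sum via orthogonality, and then compare term by term with the putative main term $k^{-1}YZn^{-1+1/k}$ by Taylor expansion. First I would note that for any integer $m$,
$$\int_{-1/2}^{1/2}v(\alp)e(-m\alp)\d\alp = k^{-1}m^{-1+1/k}$$
when $X^k<m\le(2X)^k$, and vanishes otherwise. Expanding $g(\alp)u(\alp)=\sum_{Y<y\le 2Y}\sum_{1\le z\le Z}e(\alp(y^k+z))$ inside (\ref{3.10}) and integrating term by term against $v(\alp)$ yields the closed form
$$\rho_2(n;Z) = k^{-1}\sum_{Y<y\le 2Y}\sum_{1\le z\le Z}(n-y^k-z)^{-1+1/k},$$
where the summation is restricted to triples with $X^k<n-y^k-z\le(2X)^k$.

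The first step is to verify that this restriction is vacuous on the range of interest. Using (\ref{2.1}) and (\ref{2.2}) one has $(2Y)^k+Z\ll X^{k-1+\sig}=o(X^k)$, whence for $N<n\le 2N$,
$$X^k = N/3 < n-y^k-z \le 2N < 2^kN/3 = (2X)^k,$$
the last inequality using $k\ge 3$. Next, with $t=(y^k+z)/n\ll X^{-1+\sig}=o(1)$, the elementary expansion $(1-t)^{-1+1/k}=1+O(|t|)$ yields
$$(n-y^k-z)^{-1+1/k} = n^{-1+1/k}+O\bigl(n^{-2+1/k}(y^k+z)\bigr).$$
Summing over $y$ and $z$, and using $\#\{(y,z):Y<y\le 2Y,\,1\le z\le Z\}=YZ+O(Y+Z)$, I obtain
$$\rho_2(n;Z) = k^{-1}n^{-1+1/k}YZ+O\bigl(n^{-1+1/k}(Y+Z)\bigr)+O\bigl(n^{-2+1/k}(Y^{k+1}Z+YZ^2)\bigr).$$

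Squaring, summing over $N<n\le 2N$ and using $n\asymp N=3X^k$, the lemma reduces to the four power inequalities
$$Y^2,\,Z^2\ll X^{k-1}YZ\quad\text{and}\quad Y^{2k+1}Z,\,YZ^3\ll X^{3k-1}.$$
Each of these I would verify routinely from (\ref{2.1}) and (\ref{2.2}). The only point at which the specific shape of $\sig_k$ enters is the third inequality, which reduces to $(2k+1)\sig_k\le 2k$; the tuning of $\sig_k$ in (\ref{1.a}) secures this comfortably for all $k\ge 3$. The main obstacle is thus administrative---careful bookkeeping of the power relations across the full range $Z\in[X^{k\tet},6k^2X^{k-2+1/k}]$---rather than analytical, since the derivation above contains no delicate cancellation.
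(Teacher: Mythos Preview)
Your proof is correct and follows essentially the same route as the paper: expand $\rho_2(n;Z)$ by orthogonality into a finite sum of $(n-y^k-z)^{-1+1/k}$, verify the constraint $X^k<n-y^k-z\le (2X)^k$ is automatic, Taylor-expand each term about $n^{-1+1/k}$, and check the resulting power inequalities. The paper compresses the bookkeeping into a single relative error $O(HX^{-1})$ rather than your four separate inequalities, but the substance is identical.
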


\begin{proof} By orthogonality, it follows from (\ref{3.10}) that
$$\rho_2(n;Z)=k^{-1}\sum_{Y<y\le 2Y}\sum_{1\le z\le Z}\sum_{\substack{
X^k<m\le (2X)^k\\ m+y^k+z=n}}m^{-1+1/k}.$$
Observe that when $n>N$, $y\le 2Y$ and $z\le Z$, one has
$$m=n-y^k-z=n\left( 1+O(HX^{-1}+X^{-2+1/k})\right).$$
Hence
$$m^{-1+1/k}=n^{-1+1/k}(1+O(HX^{-1})),$$
and so it follows that
$$\rho_2(n;Z)=k^{-1}YZn^{-1+1/k}(1+O(HX^{-1})).$$
We thus deduce that
\begin{align*}
\sum_{N<n\le 2N}\left|\rho_2(n;Z)-k^{-1}YZn^{-1+1/k}\right|^2&\ll 
(YZ)^2N^{-1+2/k}H^2X^{-2}\\
&\ll XYZ(X^{-k-1}YZH^2).\end{align*}
The parenthetic factor on the right hand side is at most $X^{-2+2/k}H^{2+1/k}\ll 1$. 
This completes the proof of the lemma.
\end{proof}

Write
$$S_1=r_{\grM^\dagger}(n;Z),\quad S_2=r_\grC(n;Z)-\rho_1(n;Z),$$
and
$$S_3=\rho_1(n;Z)-\rho_2(n;Z),\quad S_4=\rho_2(n;Z)-k^{-1}n^{-1+1/k}YZ.$$
Then since
$$r_\grM(n;Z)-k^{-1}n^{-1+1/k}YZ=S_1+\ldots +S_4,$$
an application of the 
elementary inequality $|S_1+\ldots +S_4|^2\le |S_1|^2+\ldots +|S_4|^2$ combines with 
(\ref{3.8}), (\ref{3.9}), and Lemmata \ref{lemma3.4} and \ref{lemma3.5} to give
\begin{equation}\label{3.11}
\Ups_\grM\ll X^{1+\eps}YZ.
\end{equation}

\section{Minor arcs with a difference} We now estimate $\Ups_\grm$, noting that by 
Bessel's inequality, one has
\begin{equation}\label{4.1}
\Ups_\grm\le \int_\grm |fgu|^2\d\alp =T-\int_\grM |fgu|^2\d\alp ,
\end{equation}
in which
$$T=\int_0^1|fgu|^2\d\alp .$$
By orthogonality, the mean value $T$ counts the number of integral solutions of the 
equation (\ref{1.4}) with $X<x_i\le 2X$, $Y<y_i\le 2Y$ and $1\le z_i\le Z$ for $i=1,2$. 
Put $h=x_1-x_2$, and for concision write $x=x_2$. Then the equation (\ref{1.4}) 
becomes
\begin{equation}\label{4.2}
h\Psi(x,h)=y_1^k-y_2^k+z_1-z_2,
\end{equation}
where
$$\Psi(x,h)=\sum_{j=1}^k\binom{k}{j}x^{k-j}h^{j-1}.$$
For any solution of (\ref{4.2}) counted by $T$, we have
$$|h|\le X^{1-k}((2^k-1)Y^k+Z)\le H.$$  
Thus, on putting
\begin{equation}\label{4.3}
F(\alp)=\sum_{|h|\le H}\sum_{\substack{X<x\le 2X\\ X<x+h\le 2X}}e(h\Psi(x,h)\alp),
\end{equation}
we infer via orthogonality that
$$T=\int_0^1F(\alp)|g(\alp)u(\alp)|^2\d\alp .$$
In view of (\ref{4.1}), therefore, we obtain the relation
\begin{equation}\label{4.4}
\Ups_\grm\le \int_0^1F|g^2u^2|\d\alp -\int_\grM |fgu|^2\d\alp .
\end{equation}

\par We require a modified Hardy-Littlewood dissection for the discussion of the mean 
value $T$. Put $C=k^{-3k}$, and let $\grN$ denote the union of the 
intervals 
$$\grN(q,a)=\{ \alp\in [0,1):|q\alp-a|\le CXY^{-k}\},$$
with $0\le a\le q\le X$ and $(a,q)=1$. Also, we denote by $\grN^\dagger$ the 
corresponding union of the intervals $\grN(q,a)$ in which $q>1$. Further, we put 
$\grn=[0,1)\setminus \grN$.

\begin{lemma}\label{lemma4.1} One has
$$\int_\grn|Fg^2u^2|\d\alp \ll X^{1+\eps}YZ.$$
\end{lemma}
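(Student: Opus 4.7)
The plan is to reduce the estimate to a pointwise bound on $|F|$ over $\grn$. Writing
$$\int_\grn|Fg^2u^2|\,\d\alp\le\Bigl(\sup_{\alp\in\grn}|F(\alp)|\Bigr)\int_0^1|g(\alp)u(\alp)|^2\,\d\alp\le\Bigl(\sup_{\alp\in\grn}|F(\alp)|\Bigr)YZ,$$
where the second inequality uses Lemma \ref{lemma3.0}, it suffices to establish $|F(\alp)|\ll X^{1+\eps}$ uniformly on $\grn$.

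For this pointwise bound, I decompose $F(\alp)=X+O(1)+2\mathrm{Re}\sum_{h=1}^H e_h(\alp)$, where $e_h(\alp)=\sum_x e(\alp((x+h)^k-x^k))$ is a polynomial exponential sum of degree $k-1$ in $x$ with leading coefficient $kh$. Cauchy--Schwarz gives $\sum_{h=1}^H|e_h(\alp)|\le H^{1/2}(E'(\alp))^{1/2}$ with $E'(\alp):=\sum_{h=1}^H|e_h(\alp)|^2$, and since $H=2^kX^{\sig_k}$ the bound $|F|\ll X^{1+\eps}$ follows once one shows the mean-square estimate
$$E'(\alp)\ll X^{2-\sig_k+\eps}\qquad(\alp\in\grn).$$

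For this, expand $|e_h|^2$ by orthogonality and exchange the order of summation: writing $y=x_2-x_1$, one has $E'(\alp)=\sum_{y,h,x_1}e(\alp\Delta)$, where $\Delta=(x_1+h)^k-x_1^k-(x_1+y+h)^k+(x_1+y)^k$. The diagonal $y=0$ contributes $O(HX)=O(X^{1+\sig_k})$, which is acceptable. For $y\ne 0$, $\Delta$ is a polynomial in $x_1$ of degree $k-2$ with leading coefficient $-k(k-1)hy$; Weyl's inequality applied to the inner $x_1$-sum gives a bound governed by the Dirichlet approximation to $k(k-1)hy\alp$. The minor-arc condition $\alp\in\grn$ furnishes, via Dirichlet with modulus $Y^k/(CX)$, coprime $(q,a)$ with $X<q\le Y^k/(CX)$ and $|q\alp-a|\le CXY^{-k}$ (the lower bound $q>X$ being forced by $\alp\notin\grN$). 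Summing over $(h,y)$ with $m=k(k-1)hy$ and using divisor bounds to control the multiplicity of $m$, the off-diagonal contribution is controlled via Vaughan's estimate \cite[Lemma~2.2]{Vau1997}, delivering $E'(\alp)\ll X^{2-\sig_k+\eps}$ for $3\le k\le 7$, where the Weyl exponent $2^{3-k}$ on a polynomial of degree $k-2$ matches $2\sig_k$ exactly.

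Main obstacle: for $k\ge 8$, the choice $\sig_k=1/(2(k-2)(k-3))$ requires a saving stronger than classical Weyl can supply on a polynomial of degree $k-2$. The Weyl exponent $2^{3-k}$ is strictly smaller than the needed $2\sig_k=1/((k-2)(k-3))$, so in this range Weyl's inequality must be replaced by a Vinogradov-type mean-value bound---for instance via Wooley's efficient congruencing---applied to the degree $k-2$ polynomial, a result calibrated precisely to the definition of $\sig_k$ in (\ref{1.a}). Verifying this sharper estimate uniformly in $(h,y)$ and summing to obtain the mean-square bound on $E'(\alp)$ is the principal technical difficulty.
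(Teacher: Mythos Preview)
Your proposal is correct and follows essentially the same route as the paper: reduce to the pointwise bound $\sup_{\alp\in\grn}|F(\alp)|\ll X^{1+\eps}$ and then invoke Lemma~\ref{lemma3.0}. The only difference is packaging: where the paper quotes ready-made Weyl-type estimates for $F$ (Vaughan's Lemma~1 of \cite{Vau1986b} for $4\le k\le 7$, the lemma of \cite{Vau1985} for $k=3$, and \cite[Lemma~10.3]{VW1991} combined with \cite{Woo2012,Woo2014} for $k\ge 8$), you sketch the Weyl differencing that underlies those lemmas, and you correctly flag that the $k\ge 8$ range needs Vinogradov mean-value input calibrated to $\sig_k=(2(k-2)(k-3))^{-1}$.

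One point to tighten in your write-up: applying Weyl's inequality as a black box to each inner $x_1$-sum, with approximation denominator depending on $m=k(k-1)hy$, does not combine directly with \cite[Lemma~2.2]{Vau1997}; you need to unpack Weyl into its constituent differencings so that the final linear phase has leading coefficient $c\,hyl_1\cdots l_{k-3}\alp$, apply H\"older over $(h,y)$ with exponent $2^{k-3}$, and only then sum $\min(X,\|n\alp\|^{-1})$ over the product variable $n$ via Lemma~2.2. With that adjustment the arithmetic closes exactly (the range $|n|\ll HX^{k-2}$ matches $q\ll Y^k/X\asymp X^{k-2+\sig}$), and your exponent bookkeeping $2\sig_k=2^{3-k}$ is on target.
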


\begin{proof} Suppose that $\alp\in \dbR$, $a\in \dbZ$ and $q\in \dbN$ satisfy 
$(a,q)=1$  and $|\alp-a/q|\le q^{-2}$. Then it follows from a pedestrian generalisation of 
the proof of \cite[Lemma 1]{Vau1986b} with $\nu=\sig$ that, when $4\le k\le 7$, one has
\begin{equation}\label{4.6}
F(\alp)\ll X^{1+\sig+\eps}(q^{-1}+X^{-1}+qX^{1-k-\sig})^{2^{2-k}}.
\end{equation}
Here, we have observed that the term with $h=0$ in (\ref{4.3}) contributes $O(X)$ to 
$|F(\alp)|$, this being majorised by the term $X^{-1}$ in the parenthetic expression on 
the right hand side of (\ref{4.6}), since $\sig=2^{2-k}$ for $4\le k\le 7$. The same 
conclusion follows from the proof of the lemma of \cite{Vau1985} in the case $k=3$.\par

Let $\alp\in \grn$. An application of Dirichlet's theorem on Diophantine approximation 
shows that there exist $a\in \dbZ$ and $q\in \dbN$, with $0\le a\le q\le (CX)^{-1}Y^k$ 
and $(a,q)=1$, for which $|q\alp-a|\le CXY^{-k}$. In such circumstances, the definition of 
$\grN$ shows that $q>X$, and hence (\ref{4.6}) yields the bound
$$F(\alp)\ll X^{1+\sig+\eps}(X^{-1}+Y^kX^{-k-\sig})^\sig \ll X^{1+\eps}.$$

\par When $k\ge 8$, meanwhile, we apply the method of proof of 
\cite[Lemma 10.3]{VW1991} in which we formally take $M=\tfrac{1}{2}$ and $R=2$. By 
substituting the conclusion of \cite[Theorem 1.5]{Woo2012}, in the enhanced form made 
available via \cite[Theorem 1.2]{Woo2014}, for \cite[Lemma 10.2]{VW1991}, one finds 
that the bound
$$\sup_{\alp\in \grn}|F(\alp)|\ll X^{1-\sig+\eps}H$$
holds with $\sig=(2(k-2)(k-3))^{-1}$. Hence, when $\alp\in \grn$, one has 
$F(\alp)\ll X^{1+\eps}$ in all cases. We note that both here, in considering the exponents 
$k\ge 8$, and in our earlier treatment for $3\le k\le 7$, the exponential sum $F(\alp)$ 
differs from the analogues occurring in the cited sources only by the presence of the 
additional summation condition $X<x+h\le 2X$ in (\ref{4.3}). However, the latter is easily 
accommodated in the respective proofs of the desired conclusions.\par

On recalling (\ref{3.4}) and Lemma \ref{lemma3.0}, we now see that
$$\int_\grn|Fg^2u^2|\d\alp \ll X^{1+\eps}\int_0^1|gu|^2\d\alp \ll X^{1+\eps}YZ.$$
This completes the proof of the lemma.
\end{proof}

It is convenient to isolate the diagonal contribution within $F(\alp)$. Write
\begin{equation}\label{4.X}
F_1(\alp)=\sum_{1\le h\le H}
\sum_{\substack{X<x\le 2X\\ X<x+h\le 2X}}e(h\Psi(x,h)\alp),
\end{equation}
and observe that, in view of (\ref{4.3}), one then has
\begin{equation}\label{4.Y}
F(\alp)=2\,\text{Re}\,F_1(\alp)+O(X).
\end{equation}

\begin{lemma}\label{lemma4.2}
One has
$$\int_{\grN^\dagger}|Fg^2u^2|\d\alp \ll X^{1+\eps}YZ.$$
\end{lemma}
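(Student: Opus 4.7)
I reduce the problem to a pointwise bound on $F_1$ combined with two applications of Lemma~\ref{lemma3.0}. Starting from the decomposition $F(\alp)=2\,\mathrm{Re}\,F_1(\alp)+O(X)$ recorded in (\ref{4.Y}),
\[
\int_{\grN^\dagger}|Fg^2u^2|\d\alp\le 2\int_{\grN^\dagger}|F_1g^2u^2|\d\alp+O\Bigl(X\int_0^1|gu|^2\d\alp\Bigr),
\]
and the $O$-term contributes $\ll XYZ$ by Lemma~\ref{lemma3.0}, which is within budget. Hence it suffices to estimate $\int_{\grN^\dagger}|F_1g^2u^2|\d\alp$, and I propose to extract from this a uniform $L^\infty$-estimate of the form $|F_1(\alp)|\ll X^{1+\eps}$ on $\grN^\dagger$. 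Once secured, a second appeal to Lemma~\ref{lemma3.0} gives
\[
\int_{\grN^\dagger}|F_1g^2u^2|\d\alp\le\sup_{\grN^\dagger}|F_1|\cdot\int_0^1|gu|^2\d\alp\ll X^{1+\eps}YZ.
\]

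To obtain the pointwise bound, fix $\alp=a/q+\bet\in\grN(q,a)$ with $q\ge 2$ and $(a,q)=1$. For each $h$, the sum $G_h(\alp)=\sum_x e(((x+h)^k-x^k)\alp)$ is a Weyl sum of degree $k-1$ in $x$ whose leading coefficient is $kh$. Splitting the range of $x$ into residue classes modulo $q$ and approximating each class sum by a smooth integral yields
\[
G_h(\alp)=q^{-1}T_h(q,a)J_h(\bet)+\text{error},
\]
where $T_h(q,a)=\sum_{r=1}^{q}e(a((r+h)^k-r^k)/q)$ and $J_h(\bet)=\int e(((t+h)^k-t^k)\bet)\d t\ll X$. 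Summing over $h\in[1,H]$ and interchanging the order of summation in the $T_h$-factor recasts the main contribution as a weighted partial Gauss sum of the polynomial $(r+h)^k$ modulo $q$. Its standard estimate via $|S(q,a)|\ll q^{1-1/k+\eps}$, coupled with partial summation in the smooth weight $J_h(\bet)$, delivers the target $|F_1(\alp)|\ll X^{1+\eps}$. The cumulative error terms arising from the individual approximations of $G_h$ are absorbed by a supplementary mean-value argument based on the polynomial structure of $\Psi(x,h)$ together with the Hua-type bound exploited already in \S3.

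The principal obstacle arises for small denominators---in particular $q=2$---where the arithmetic saving $q^{-1/k}$ from $|S(q,a)|$ amounts only to a constant factor and so cannot by itself deliver the power saving of $X^\sigma$ required over the trivial estimate $|F_1|\le XH$. In this regime the argument must combine the modest Gauss-sum saving with cancellation arising from the oscillation of $J_h(\bet)$ across $h\in[1,H]$, exploiting the arc width $CXY^{-k}/q$ furnished by the definition of $\grN(q,a)$. Orchestrating this coupled arithmetic-analytic cancellation is the technical heart of the proof.
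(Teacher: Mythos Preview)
Your opening reduction via (\ref{4.Y}) and Lemma~\ref{lemma3.0} is fine, but the core of your plan---the pointwise estimate $|F_1(\alp)|\ll X^{1+\eps}$ uniformly on $\grN^\dagger$---is not proved, and your sketch does not establish it. For intermediate denominators, say $q\asymp X^{1/2}$ when $k=3$, the Weyl-type bound (\ref{4.6}) gives only $F_1\ll X^{5/4+\eps}$, while your major-arc approximation $G_h\approx q^{-1}T_h(q,a)J_h(\bet)$ yields a main term of size $\sum_h q^{-1}|T_h|\,|J_h|\asymp H\cdot q^{-1/2}X=X^{5/4}$ as well, unless you can exhibit further cancellation in the sum over $h$. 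Your stated source of that cancellation, ``oscillation of $J_h(\bet)$ across $h$'', is illusory: on $\grN^\dagger$ the phase in $J_h$ varies by $O(1)$, so $J_h(\bet)$ is essentially $X-h$, positive and monotone. Any genuine saving in $h$ must come from the arithmetic factor $T_h(q,a)$, which for general $q$ would require something of Weil strength that you do not invoke. The error terms you defer to ``a supplementary mean-value argument'' are likewise unspecified; summed over $h$ they are of order $Hq^{1/2+\eps}$, which for $q$ near $X$ is already $HX^{1/2+\eps}$ and, in the borderline case $k=3$, matches your target with no room to spare.

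The paper avoids all of this by never seeking a pointwise bound for $F_1$ on $\grN^\dagger$. The decisive observation you are missing is that $u(\alp)\ll X$ throughout $\grN^\dagger$, since for $q\ge 2$ the linear exponential sum $u$ sees a nontrivial rational and collapses (compare Lemma~\ref{lemma3.1}). With one factor of $|u|$ replaced by $X$, Cauchy--Schwarz against $I_2=\int_0^1|F_1|^2\d\alp\ll (XH)^{1+\eps}$ and $I_3=\int_0^1|g|^4\d\alp\ll Y^{2+\eps}$ already closes the argument for $k\ge 4$; the case $k=3$ needs a further dissection of $\grN^\dagger$ according to the size of $\|\alp\|$, combining the bounds $u(\alp)\ll X$ and $u(\alp)\ll\|\alp\|^{-1}$ on the two pieces. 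In short, the saving on $\grN^\dagger$ comes from $u$, not from $F_1$.
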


\begin{proof} On recalling (\ref{3.4}) and the estimate supplied by Lemma 
\ref{lemma3.0}, one finds that (\ref{4.Y}) yields the relation
\begin{equation}\label{4.8}
\int_{\grN^\dagger}|Fg^2u^2|\d\alp \ll XI_1+\int_{\grN^\dagger}|F_1g^2u^2|\d\alp .
\end{equation}
By reference to the argument leading to (\ref{3.2a}), we find that when $a\in \dbZ$, 
$q\in \dbN$ and $\bet+a/q\in \grN(q,a)\subseteq \grN^\dagger$, one has
\begin{equation}\label{4.9}
u(\bet+a/q)\ll q+ZXY^{-k}\ll X.
\end{equation}

\par Suppose first that $k\ge 4$. Then an application of Schwarz's inequality in 
combination with Lemma \ref{lemma3.0} reveals that
$$\int_{\grN^\dagger}|Fg^2u^2|\d\alp \ll XYZ+X^2I_2^{1/2}I_3^{1/2},$$
where
\begin{equation}\label{4.9a}
I_2=\int_0^1|F_1(\alp)|^2\d\alp \quad \text{and}\quad I_3=\int_0^1|g(\alp)|^4\d\alp .
\end{equation}
By orthogonality, the integral $I_2$ counts the number of integral solutions of the equation 
$h_1\Psi(x_1,h_1)=h_2\Psi(x_2,h_2)$, with $X<x_i\le 2X$ and $1\le h_i\le H$ for $i=1,2$. 
A divisor function estimate confirms that, for each fixed choice of $x_2$ and $h_2$, there 
are $O((XH)^\eps)$ possible choices for $x_1$ and $h_1$, whence 
$I_2\ll (XH)^{1+\eps}$. Meanwhile, the bound $I_3\ll Y^{2+\eps}$ follows from Hua's lemma (see \cite[Lemma 2.5]{Vau1997}). Hence
\begin{align*}
\int_{\grN^\dagger}|Fg^2u^2|\d\alp &\ll XYZ+X^{2+\eps}(XH)^{1/2}Y\ll 
X^{1+\eps}YZ(1+X^{3/2}Z^{-1}H^{1/2}).
\end{align*}
Since $X^{-k+7/2-1/k}H^{1/2+1/k}\le 1$, the conclusion of the lemma follows for 
$k\ge 4$.\par

We turn next to the situation in which $k=3$. Put $A=Z^{-1/2}X^{-1/8}$, and divide the 
set $\grN^\dagger$ into the two subsets
$$\grN^\dagger_0=\{ \alp \in \grN^\dagger:\|\alp\|\le A\}\quad \text{and}\quad 
\grN_1^\dagger =\{\alp\in \grN^\dagger:\|\alp\|>A\}.$$
Making use of the familiar estimate $u(\alp)\ll \|\alp\|^{-1}$, we find that
$$\int_{\grN_1^\dagger}|F_1g^2u^2|\d\alp \ll (Z^{1/2}X^{1/8})^2
\int_{\grN_1^\dagger}|F_1g^2|\d\alp .$$
An application of Schwarz's inequality yields the bound
$$\int_{\grN_1^\dagger}|F_1g^2|\d\alp \ll I_2^{1/2}I_3^{1/2},$$
where $I_2$ and $I_3$ are defined as in (\ref{4.9a}). We observe that our earlier bounds 
for $I_2$ and $I_3$ remain valid also when $k=3$. Thus, we conclude that
\begin{align}
\int_{\grN^\dagger_1}|F_1g^2u^2|\d\alp &\ll ZX^{1/4+\eps}(XH)^{1/2}(Y^2)^{1/2}
\ll X^{1+\eps}YZ(X^{-1/4}H^{1/2}).\label{4.9b}
\end{align}

\par For the treatment of $\grN_0^\dagger$, we require a sharp upper bound for
$$I_4=\int_{\grN_0^\dagger}|g(\alp)|^4\d\alp .$$
Recall (\ref{3.aa}), and define
$$g^*(\alp)=q^{-1}S(q,a)w(\alp-a/q),$$
when $\alp\in \grN(q,a)\subseteq \grN$, and otherwise set $g^*(\alp)=0$. Then we find 
from \cite[Theorem 4.1]{Vau1997} that whenever $\alp\in \grN$, one has 
$g(\alp)-g^*(\alp)\ll X^{1/2+\eps}$. Hence
\begin{equation}\label{4.11}
I_4\ll \int_\grN|g^*(\alp)|^4\d\alp +X^{2+\eps}\text{mes}(\grN_0^\dagger).
\end{equation}
From \cite[Lemmata 4.9 and 6.2]{Vau1997}, one readily infers the bound
$$\int_\grN |g^*(\alp)|^4\d\alp \ll Y^{1+\eps}.$$
Meanwhile
\begin{align*}
\text{mes}(\grN_0^\dagger)&\le \sum_{1\le q\le X}\sum_{\substack{1\le a\le q\\ 
\|a/q\|\le 2Z^{-1/2}X^{-1/8}}}\text{mes}(\grN(q,a))\\
&\ll \sum_{1\le q\le X}\left( qZ^{-1/2}X^{-1/8}\right) (q^{-1}XY^{-3})\ll 
X^{15/8}Y^{-3}Z^{-1/2}.
\end{align*}
On substituting these estimates into (\ref{4.11}), we discern that
\begin{equation}\label{4.12}
I_4\ll Y^{1+\eps}+X^{31/8+\eps}Y^{-3}Z^{-1/2}\ll Y^{1+\eps},
\end{equation}
since $\frac{31}{8}-3\left(\frac{5}{6}\right)-\frac{1}{2}\left(\frac{7}{6}\right)
=\frac{19}{24}<\frac{5}{6}$.\par

Next, by (\ref{4.9}) and the inequalities of Cauchy and Schwarz, one has
\begin{equation}\label{4.13}
\int_{\grN_0^\dagger}|F_1g^2u^2|\d\alp \ll XI_4^{1/2}(HI_5)^{1/2},
\end{equation}
where
$$I_5=\int_0^1F_2(\alp)|u(\alp)|^2\d\alp ,$$
in which we write
$$F_2(\alp)=\sum_{1\le h\le H}\biggl| \sum_{\substack{X<x\le 2X\\ X<x+h\le 2X}}
e(h\Psi(x,h)\alp)\biggr|^2.$$
The integral $I_5$ does not exceed the number of integral solutions of the equation
$$h(\Psi(x_1,h)-\Psi(x_2,h))=z_1-z_2,$$
with $1\le h\le H$, $X<x_1,x_2\le 2X$ and $1\le z_1,z_2\le Z$. Since $x_1-x_2$ divides 
the polynomial $\Psi(x_1,h)-\Psi(x_2,h)$, it follows via an elementary divisor function 
estimate that, whenever $z_1$ and $z_2$ are fixed with $z_1\ne z_2$, then there are 
$O(Z^\eps)$ possible choices for $h$, $x_1$ and $x_2$. Hence we deduce that
$$I_5\ll HXZ+Z^{2+\eps}\ll HXZ.$$
On substituting this bound together with (\ref{4.12}) into (\ref{4.13}), we see that
$$\int_{\grN_0^\dagger}|F_1g^2u^2|\d\alp \ll XY^{1/2}(H^2XZ)^{1/2}.$$
This, in combination with Lemma \ref{lemma3.0} and equations (\ref{4.8}) and 
(\ref{4.9b}), gives
$$\int_{\grN^\dagger}|Fg^2u^2|\d\alp 
\ll X^{1+\eps}YZ(1+X^{-1/4}H^{1/2}+X^{1/2}Y^{-1/2}Z^{-1/2}H).$$
Since $\frac{1}{2}-\frac{5}{12}-\frac{7}{12}+\frac{1}{2}=0$, the conclusion of the 
lemma follows for $k=3$.
\end{proof}

The treatment of the minor arcs is now coming to an end. Define
$$\grD=\{\alp \in [0,1):\|\alp\|\le CXY^{-k}\}.$$
Note that $\grM=\grM^\dagger\cup \grC$ and $\grN=\grN^\dagger\cup \grD$. 
Since $[0,1)=\grD\cup \grN^\dagger\cup \grn$, it follows by combining Lemmata 
\ref{lemma4.1} and \ref{lemma4.2} that
$$\int_0^1F|gu|^2\d\alp =\int_\grD F|gu|^2\d\alp +O(X^{1+\eps}YZ).$$
Likewise, we obtain from Lemma \ref{lemma3.3} the relation
$$\int_\grM|fgu|^2\d\alp =\int_\grC|fgu|^2\d\alp +O(X^{1+\eps}YZ).$$
Hence, we conclude from (\ref{4.4}) that
\begin{equation}\label{4.Z}
\Ups_\grm \le \int_\grD F|gu|^2\d\alp -\int_\grC|fgu|^2\d\alp +O(X^{1+\eps}YZ).
\end{equation}

\section{The annihilation of the central intervals}
In this penultimate section, we complete the estimation of $\Ups_{\grm}$ by exploiting 
cancellations between the two integrals on the right hand side of equation (\ref{4.Z}). 
With this in view, we put $\grc=\grD\setminus\grC$ and recast the relation (\ref{4.Z}) as
\begin{equation}\label{5.1}
\Ups_\grm \le \int_\grC  (F-|f|^2)|gu|^2\d \alp+\int_\grc F|gu|^2\d \alp + 
O(X^{1+\eps}YZ).
\end{equation}

We first show that the integral over $\grc$ can be absorbed into the error term. The 
argument will depend on the following simple estimate.

\begin{lemma}\label{L5.1}
Let $\Delta$ be a positive number. Then
$$ \int_{-\Del}^\Del |g(\alp)|^2\d\alp \ll \Del Y + Y^{2-k+\eps}.$$
\end{lemma}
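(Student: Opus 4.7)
The plan is to expand the square and integrate term by term, exploiting the explicit evaluation of $\int_{-\Del}^\Del e(\alp m)\d\alp$ together with the separation between distinct $k$-th powers in the range under consideration.

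Specifically, first I would square out $|g(\alp)|^2$ and interchange summation with integration, obtaining
\[
\int_{-\Del}^\Del |g(\alp)|^2\d\alp =\sum_{Y<y_1,y_2\le 2Y}\int_{-\Del}^\Del e(\alp(y_1^k-y_2^k))\d\alp .
\]
The diagonal $y_1=y_2$ contributes exactly $2\Del Y+O(\Del)$, accounting for the $\Del Y$ term in the claimed bound. For the off-diagonal, I would invoke the elementary estimate
\[
\Bigl|\int_{-\Del}^\Del e(\alp m)\d\alp \Bigr|\le \min\bigl(2\Del,(\pi |m|)^{-1}\bigr)
\]
valid for $m\ne 0$.

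Next, I would substitute $h=y_1-y_2$, noting that $1\le|h|\le Y$, and use the factorisation $y_1^k-y_2^k=h\cdot\Psi(y_2,h)$ together with the trivial lower bound $\Psi(y_2,h)\gg Y^{k-1}$ valid when $Y<y_2,y_2+h\le 2Y$; this yields $|y_1^k-y_2^k|\gg |h|Y^{k-1}$. The off-diagonal contribution is thus
\[
\ll Y\sum_{1\le h\le Y}\min\bigl(\Del,\,Y^{1-k}h^{-1}\bigr).
\]

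Finally, I would handle this last sum by splitting the range at $h_0=Y^{1-k}\Del^{-1}$ (truncated to $[1,Y]$): the terms with $h\le h_0$ contribute at most $\Del h_0\le Y^{1-k}$, while the tail $h>h_0$ sums geometrically to $O(Y^{1-k}\log Y)$. In either case the total off-diagonal contribution is $\ll Y^{2-k+\eps}$, which combines with the diagonal to give the lemma. The only mildly delicate point is to check that the case $\Del<Y^{-k}$ (when $h_0$ would exceed $Y$) still fits under $\Del Y+Y^{2-k+\eps}$, which follows because then $\Del Y^2\le Y^{2-k}$; otherwise the argument is routine.
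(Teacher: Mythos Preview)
Your proof is correct and follows essentially the same approach as the paper: expand the square, separate the diagonal, and bound the off-diagonal via $|y_1^k-y_2^k|\gg |h|Y^{k-1}$. The paper is marginally more direct in that it drops the $\min$ and uses only the bound $|\int_{-\Del}^\Del e(\alp m)\d\alp|\le 2/|m|$, which makes the splitting at $h_0$ and the case analysis for small $\Del$ unnecessary; the sum $\sum_{1\le h\le Y}Y\cdot (hY^{k-1})^{-1}\ll Y^{2-k+\eps}$ falls out immediately.
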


\begin{proof} By (\ref{2.6}), one has
$$ \int_{-\Del}^\Del |g(\alp)|^2\d\alp = \sum_{Y<y_1,y_2\le 2Y}  \int_{-\Del}^\Del
e(\alp(y_1^k-y_2^k))\d\alp. $$
The terms with $y_1=y_2$ contribute $2\Del Y$. The remaining terms contribute an 
amount not exceeding
$$ \sum_{\substack{Y<y_1,y_2\le 2Y\\y_1\neq y_2}} \frac{2}{|y_1^k-y_2^k|}.$$
Here, we write $l=y_1-y_2$, and observe that by symmetry, it suffices to estimate the 
part of the sum where $l>0$. But then $y_1^k-y_2^k\gg lY^{k-1}$, and the sum in the 
preceding display is therefore bounded by
$$ \sum_{1\le l\le Y} \sum_{Y<y_2\le 2Y} \frac{1}{lY^{k-1}} \ll Y^{2-k+\eps}.$$ 
The desired conclusion now follows.
\end{proof}

\begin{lemma}\label{L5.2} One has
$$\int_\grc F(\alp)|g(\alp)u(\alp)|^2\d\alp \ll X^{1+\eps}YZ.$$
\end{lemma}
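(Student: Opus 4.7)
The strategy is to decompose $F=X+2\,\text{Re}(F_1)$ as in (4.Y) and to estimate the two resulting contributions separately with the aid of Lemma \ref{L5.1}.

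The contribution of the leading $X$ is immediate: by Lemma \ref{lemma3.0} one has
$$\int_\grc X|g(\alp)u(\alp)|^2\d\alp\le X\int_0^1|gu|^2\d\alp=XI_1\le XYZ,$$
which is well within the desired bound $X^{1+\eps}YZ$.

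For the contribution of $F_1$, I would apply Cauchy--Schwarz in a form that separates the differencing sum from the product $|gu|^2$. A natural choice is
$$\Bigl|\int_\grc F_1|gu|^2\d\alp\Bigr|\le\Bigl(\int_0^1|F_1|^2|u|^2\d\alp\Bigr)^{1/2}\Bigl(\int_\grc|g|^4|u|^2\d\alp\Bigr)^{1/2}.$$
The first factor is bounded via the elementary inequality $|F_1|^2\le HF_2$ together with the estimate $I_5\ll HXZ$ established inside the proof of Lemma \ref{lemma4.2}; this yields $(H^2XZ)^{1/2}$. The second factor is bounded by combining the pointwise inequality $|g|^2\le Y^2$ with the pointwise estimate $|u(\alp)|\le\min(Z,\|\alp\|^{-1})$, applying Lemma \ref{L5.1} to $\int_\grc|g|^2\d\alp$, possibly after a dyadic decomposition of $\grc$ into shells according to the size of $\|\alp\|$ so that the two regimes of $|u|$ are handled separately.

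The main obstacle is verifying that the product of these bounds does not exceed the target $X^{1+\eps}YZ$: a naive combination falls short by a factor of roughly $X^{1/k}$, so the saving must be extracted by using the true strength of Lemma \ref{L5.1} (which confines $\int_\grc|g|^2\d\alp$ to size $XY^{1-k}+Y^{2-k+\eps}$, far smaller than the trivial $Y$) together with the decay of $|u|$ on the upper portion of $\grc$. The exponents $\sig_k$ in (1.a) and $\tet_k$ in (1.b) are calibrated precisely so that this balance can be maintained, and the extremal case $k=3$ with $Z$ near $X^{k-2+1/k}$ is the tightest. In that case one may have to reorganise the Cauchy--Schwarz split, or handle the near-diagonal contribution of $F_1|g|^2$ via the paucity of representations $(x+h)^k-x^k=y_2^k-y_1^k$, in order to force the final estimate into the required form.
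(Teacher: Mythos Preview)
Your proposal is incomplete, as you yourself acknowledge, and the gap is real rather than a matter of bookkeeping. The Cauchy--Schwarz split you propose cannot be rescued for small $k$: even after feeding the decay $|u(\alp)|\le\min(Z,\|\alp\|^{-1})$ and the full strength of Lemma~\ref{L5.1} into the factor $\int_\grc|g|^4|u|^2\d\alp$, one obtains at best a bound of order $Y^3Z$, and the resulting product $(H^2XZ)^{1/2}(Y^3Z)^{1/2}=HX^{1/2}Y^{3/2}Z$ exceeds $X^{1+\eps}YZ$ unless $\sig\le 1/(2k+1)$; this fails for $k=3,4,5$. The underlying reason is structural: your first factor $\int_0^1|F_1|^2|u|^2\d\alp\ll HI_5$ is a \emph{global} mean value over $[0,1)$, completely insensitive to the restriction $\alp\in\grc$, whereas it is precisely this restriction that carries the needed saving. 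The paucity heuristic you mention at the end is too vague to close the gap either, since the variables $z_i$ remain in play.

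The missing idea is a \emph{pointwise} bound for $F(\alp)$ on $\grD$. Because every $\alp\in\grc\subset\grD$ is close to $0$, each inner sum over $x$ in (\ref{4.X}) is a Weyl sum of degree $k-1$ evaluated at a point with denominator $q=1$, and Vaughan's estimate \cite[Lemma~2]{Vau1986b} (whose proof is valid for $k=3$ as well when $q=1$) yields
\[
F(\alp)\ll HX(1+HX^{k-1}\|\alp\|)^{-1}+X\qquad(\alp\in\grD).
\]
With this in hand no Cauchy--Schwarz is needed: the $O(X)$ term contributes $XI_1\le XYZ$ as you already noted, while for the main term one inserts the trivial bound $|u|\le Z$, decomposes $\grc$ dyadically as $\|\alp\|\asymp AY^{-k}$, and applies Lemma~\ref{L5.1} on each shell. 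The explicit weight $(1+HX^{k-1}\|\alp\|)^{-1}$ supplies exactly the decay in $\|\alp\|$ that your mean-square treatment of $F_1$ discards, and the resulting sum over shells is $\ll X^{1+\eps}YZ$ for every $k\ge 3$.
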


\begin{proof}
We note that when $\alp\in\grD$ one has
$$ HX^{k-2} \| \alp\|  \le 2^kCX^{\sig+k-2} XY^{-k} \le 2^kC. $$
Hence, temporarily assuming that $k\ge 4$ and estimating the sum $F_1(\alp)$ defined in 
(\ref{4.X}) via \cite[Lemma 2]{Vau1986b}, we first deduce that
$$ F_1(\alp) \ll HX( 1+ HX^{k-1} \|\alp\|)^{-1} + H, $$
and then infer from (\ref{4.Y}) the bound
\begin{equation}\label{5.2}
F(\alp) \ll HX( 1+ HX^{k-1} \|\alp\|)^{-1} + X.
\end{equation}
The proof of \cite[Lemma 2]{Vau1986b} remains valid when $k=3$ and $q=1$ (in the 
notation of this reference). Hence (\ref{5.2}) holds for all $k\ge 3$, and consequently,
\begin{equation} \label{5.3}
\int_\grc  F|gu|^2\d\alp \ll X I_1 + \Gam ,
\end{equation}
where $I_1$ is given by (\ref{3.4}), and 
$$ \Gam = HX \int_{\grc} \frac{|g(\alp)u(\alp)|^2}{1+HX^{k-1}\|\alp\|}\d\alp. $$
Note that $\grc$ is the union of two intervals, one of which being $[QX^{-k}, XY^{-k}]$.
By symmetry, and since the integrand has period 1, it suffices to estimate the contribution 
from this interval. This we cover by $O(\log X)$ disjoint intervals $[AY^{-k}, 2AY^{-k}]$, 
with $QX^{\sig-1}\le A \le X$. By Lemma \ref{L5.1}, making use of the trivial bound 
$|u(\alp)|\le Z$, we find that
\begin{align*}
\int_{AY^{-k}}^{2AY^{-k}}\frac{|g(\alp)u(\alp)|^2}{1+HX^{k-1}\alp}\d\alp &\ll 
Z^2H^{-1} X^{1-k} A^{-1} Y^k  \int_{AY^{-k}}^{2AY^{-k}} |g(\alp)|^2\d\alp \\
&\ll Z^2A^{-1} (AY^{1-k} + Y^{2-k+\eps}) \\
&\ll Z^2 Y^{1-k} + Z^2Q^{-1}X^{1-\sig} Y^{2-k+\eps}.
\end{align*}
Here the second term on the right hand side dominates, and we infer the bound
$$ \Gam \ll HX^{2-\sig} Y^{2-k+\eps} Z^2 Q^{-1}
\ll X^{1+\eps}YZ(XY^{1-k}Q^{-1} Z). $$
Since $Y/Q = X^{(2\sig-1)/k}$ and $XY^{-k}Z \ll X^{-\sig +1/k}$, it follows that 
$\Gam \ll X^{1+\eps}YZ$. The lemma now follows from (\ref{5.3}) and Lemma 
\ref{lemma3.0}.
\end{proof}

\begin{lemma}\label{L5.3} One has
$$ \int_\grC (F-|f|^2) |gu|^2 \d\alp = \int_\grC (F-|f|^2) |wu|^2 \d\alp + O(XYZ). $$
\end{lemma}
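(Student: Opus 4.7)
The aim is to establish that
$$J := \int_\grC (F - |f|^2)(|g|^2 - |w|^2)|u|^2 \d \alp$$
is $O(XYZ)$. My plan begins with the algebraic identity
$$|g|^2 - |w|^2 = (g - w)\bar g + w \overline{(g-w)},$$
which splits $J$ into two mutually symmetric pieces. I focus on
$$J_1 := \int_\grC (F - |f|^2)(g - w)\bar g |u|^2 \d \alp,$$
and treat the companion term (with $w\overline{(g-w)}$ in place of $(g-w)\bar g$) by an exactly parallel argument.

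The essential ingredients I would assemble are: (i) the pointwise bound $|g(\alp) - w(\alp)| \ll (1 + Y^k|\alp|)^{1/2+\eps}$ valid on $\grC$ via Theorem~4.1 of \cite{Vau1997}, noting that $q=1$ and $a=0$ on $\grC$ so that the standard major arc approximant of $g$ is precisely $w$; on $\grC$ this yields $|g-w| \ll X^{\sig(1-1/k)/2+\eps}$; (ii) the uniform bound $|u(\alp)|\le Z$ on $\grC$, holding because $ZQX^{-k}\le 1$ by (\ref{2.1}) and (\ref{2.2}); (iii) the pointwise estimate (\ref{5.2}) from the proof of Lemma~\ref{L5.2}, delivering in particular $\int_\grC|F|\d\alp \ll X^{2-k}\log X$; and (iv) the identity $\int_0^1|f(\alp)|^2|u(\alp)|^2\d\alp = XZ$, immediate from orthogonality together with the observation that $|x_1^k - x_2^k|\ge X^{k-1}>Z$ whenever $x_1\ne x_2$ in $(X,2X]$, so that only diagonal solutions contribute.

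Using $|F - |f|^2|\le |F|+|f|^2$, the natural route is a Cauchy-Schwarz splitting
$$|J_1| \le \left(\int_\grC (|F|+|f|^2)|g-w|^2|u|^2\d\alp\right)^{1/2} \left(\int_\grC (|F|+|f|^2)|g|^2|u|^2\d\alp\right)^{1/2}.$$
The second factor is controlled using $|g|\le Y$ pointwise together with ingredient (iv), giving $\ll Y^2 XZ$. The first factor, applying the pointwise bound from (i) together with (iii) and (iv), gives $\ll X^{\sig(1-1/k)+\eps}XZ$. Multiplying the square roots and checking the exponents via the relations $Y=X^{1-(1-\sig)/k}$ and $Q=X^{1-\sig/k}$ from (\ref{2.1}) yields the stated error term.

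The main obstacle is keeping the exponent bookkeeping sufficiently sharp; the crude combination of pointwise bounds on $|g-w|$ with the remaining factors is essentially at the boundary of the target. The remedies I have in mind are either to refine the treatment on the sub-interval $\{|\alp|\le Y^{-k}\}$ of $\grC$, on which Euler-Maclaurin gives the much sharper bound $g-w \ll 1$, or alternatively to invoke Lemma~\ref{L5.1} directly to integrate $|g-w|^2$ over $\grC$ rather than using its supremum. Once $J_1$ is handled, the companion integral is estimated by the same Cauchy-Schwarz procedure with the roles of $g$ and $w$ symmetrically exchanged, and the lemma follows by adding the two estimates.
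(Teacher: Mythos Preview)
Your Cauchy--Schwarz route has a genuine gap, and you have correctly diagnosed where it lies. After Cauchy--Schwarz, your second factor is bounded by taking $|g|\le Y$ pointwise; this discards all oscillation of $g$ (equivalently, of $w$) over $\grC$, and that lost cancellation is exactly the factor $X^{\sig(1-1/k)/2}$ you find yourself short by. Neither remedy recovers it. Applying Lemma~\ref{L5.1} to $\int_\grC|g-w|^2\d\alp$ does not help, because the deficit is in the second Cauchy--Schwarz factor, not the first, and the weight $(|F|+|f|^2)|u|^2$ blocks a direct mean-value input there. The sub-interval refinement could in principle be made to work, but it would require a dyadic decomposition of $\grC\setminus\{|\alp|\le Y^{-k}\}$ together with nontrivial decay bounds for $w$, $f$ and $F$ on each piece; none of this is supplied.

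The paper avoids Cauchy--Schwarz entirely. From $g-w=O(1)$ on $\grC$ (via \cite[Theorem~4.1]{Vau1997} with $q=1$) one has $|g|^2-|w|^2=O(|w|)$, so it suffices to bound
\[
\int_\grC |(F-|f|^2)wu^2|\d\alp .
\]
The $F$-part is handled with the trivial bounds $F\ll HX$, $u\ll Z$, together with the $L^1$-estimate $\int_\grC|w|\d\alp\ll Y^{1-k+\eps}$ coming from \cite[Lemma~6.2]{Vau1997}; this gain of $Y^{k-1}$ over the pointwise bound $|w|\le Y$ is precisely what your Cauchy--Schwarz throws away. The $|f|^2$-part is handled with the major-arc decay $f\ll X(1+X^k\|\alp\|)^{-1}$, again from Theorem~4.1 and Lemma~6.2, together with trivial bounds for $w$ and $u$; integrating $(1+X^k|\alp|)^{-2}$ gives $\int_\grC|f^2wu^2|\d\alp\ll YZ^2X^{2-k}\ll XYZ$. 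No splitting of $\grC$ and no auxiliary mean value are needed.
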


\begin{proof} When $\alp\in\grC$, we find from \cite[Theorem 4.1]{Vau1997} that
$g(\alp)=w(\alp)+O(1)$, and hence $|g(\alp)|^2 = |w(\alp)|^2 + O(|w(\alp)|)$. On multiplying 
this relation with $(F-|f|^2) |u|^2$, one finds that the lemma will follow from the estimate
\begin{equation}\label{5.4}
 \int_\grC |(F-|f|^2)wu^2| \d\alp \ll XYZ,
\end{equation}
that we now establish in  two steps.

First we observe that \cite[Lemma 6.2]{Vau1997} delivers the bound
$$ \int_\grC |w(\alp)|\d\alp \ll Y \int_{-1/2}^{1/2} (1+Y^k|\alp|)^{-1}\d\alp \ll Y^{1-k+\eps}. $$
Hence, the trivial bounds $F(\alp)\ll HX$ and $u(\alp)\ll Z$ suffice to conclude that
\begin{equation}\label{5.5}
\int_\grC |Fwu^2| \d\alp \ll HXY^{1-k+\eps}Z^2 = XYZ(HY^{\eps-k}Z), 
\end{equation}
and we note that $HY^{\eps-k}Z\ll 1$.

Another appeal to  \cite[Theorem 4.1]{Vau1997} shows that whenever $\alp\in\grC$, one has 
$f(\alp) = v(\alp) + O(1)$, and \cite[Lemma 6.2]{Vau1997} then delivers the estimate
$$ f(\alp) \ll X(1+ X^k\|\alp\|)^{-1}. $$
Using trivial bounds for $w(\alp)$ and $u(\alp)$, we now infer that
$$ \int_\grC  |f^2wu^2| \d\alp \ll YZ^2X^2 \int_{-1/2}^{1/2} (1+X^k|\alp|)^{-2}\d\alp
\ll YZ^2X^{2-k} \ll XYZ. $$
On combining this bound with (\ref{5.5}), we arrive at (\ref{5.4}). This completes the proof of the lemma.
\end{proof}

\begin{lemma}\label{L5.4} Let $\grK=[0,1]\setminus\grC$. Then
$$\int_\grK (F(\alp)-|f(\alp)|^2) |w(\alp)u(\alp)|^2 \d\alp \ll XYZ.$$
\end{lemma}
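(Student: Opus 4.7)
The plan is to dominate
$$\int_\grK \bigl|F(\alp)-|f(\alp)|^2\bigr|\,|w(\alp)u(\alp)|^2\d\alp\le\int_\grK|F|\cdot|wu|^2\d\alp+\int_\grK|f|^2|wu|^2\d\alp$$
via the triangle inequality and to show that each integral on the right hand side is $\ll XYZ$.

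The crucial input is a sharp pointwise estimate for $|w|^2$ on $\grK$. A comparison of the exponents in (\ref{2.1}) reveals $QX^{-k}>Y^{-k}$, so on $\grK=[0,1)\setminus\grC$ one has $\|\alp\|>Y^{-k}$, and \cite[Lemma 6.2]{Vau1997} supplies $|w(\alp)|^2\ll Y^{2-2k}\|\alp\|^{-2}$ throughout $\grK$. Together with $|u(\alp)|^2\ll\min(Z^2,\|\alp\|^{-2})$, this governs the size of $|wu|^2$ in the analysis below.

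For the $|f|^2$ integral, the argument underlying Lemma \ref{L5.1} applies verbatim with $g$ replaced by $f$ to yield the mean value estimate $\int_{-A}^A|f(\alp)|^2\d\alp\ll AX+X^{2-k+\eps}$. I would dyadically decompose $\grK$ by $\|\alp\|\sim A$ with $A\ge QX^{-k}$, splitting according to whether $A\le Z^{-1}$ (using $|u|^2\le Z^2$) or $A>Z^{-1}$ (using $|u|^2\ll\|\alp\|^{-2}$). After inserting the $|w|^2$ estimate, each dyadic piece reduces to a constant multiple of $A^{-j}\int_{\|\alp\|\sim A}|f|^2\d\alp$ for $j\in\{2,4\}$. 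The resulting geometric series in $A$ is dominated by its smallest-$A$ term; on substituting the definitions (\ref{2.1}) together with the permissible range $Z\le 6k^2X^{k-2+1/k}$, one verifies the bound $\ll XYZ$ with a power-of-$X$ saving.

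For the $|F|$ integral, I would partition $\grK=\grc\cup\grN^\dagger\cup\grn$ using the dissection of \S4. On $\grc\subseteq\grD$ the bound (\ref{5.2}) applies; since $HX^{k-1}\|\alp\|\ge HQ/X>1$ on $\grK$, it sharpens to $|F(\alp)|\ll X^{2-k}\|\alp\|^{-1}+X$. On $\grN^\dagger$ the estimate (\ref{4.6}) with $q\ge 2$ yields $|F(\alp)|\ll X^{1+\sig+\eps}$, with an analogous bound for $k\ge 8$ obtained as in Lemma \ref{lemma4.1}. On $\grn$, Lemma \ref{lemma4.1} itself provides $|F(\alp)|\ll X^{1+\eps}$. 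In each case, combining with the pointwise estimates for $|wu|^2$ and integrating (dyadically on $\grc$) produces $\ll XYZ$. The main obstacle will be the $\grN^\dagger$ piece, where the pointwise bound on $F$ is weaker by a factor of $X^\sig$ than on $\grn$; the compensation comes from $\|\alp\|\ge 1/(2q)\ge 1/(2X)$ on $\grN^\dagger$, which makes $|w(\alp)|^2$ very small there, and careful accounting with the measure of the arcs restores the bound.
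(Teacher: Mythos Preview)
Your approach is viable and the exponent bookkeeping works out, but it is considerably more elaborate than the paper's argument. The chief simplification you miss concerns the $F$ integral: the paper uses only the trivial bound $|F(\alp)u(\alp)^2|\le HXZ^2$ together with the $L^2$ estimate
\[
\int_\grK|w(\alp)|^2\d\alp\ll Y^2\int_{QX^{-k}}^{1/2}(1+Y^k\alp)^{-2}\d\alp\ll Y^{2-k}H^{-1+1/k}
\]
(immediate from \cite[Lemma 6.2]{Vau1997} and $\|\alp\|>QX^{-k}$), and the product is $\ll XYZ$ because $H^{1/k}Y^{1-k}Z\ll 1$. Your decomposition $\grK=\grc\cup\grN^\dagger\cup\grn$ and the Weyl-type bounds on $F$ are therefore entirely unnecessary for this term. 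For the $|f|^2$ integral, the paper splits $\grK=\grc\cup(\grK\setminus\grc)$: on $\grK\setminus\grc$ the trivial bound $|fu|^2\le X^2Z^2$ combined with $\int_{\grK\setminus\grc}|w|^2\ll Y^{2-k}X^{-1}$ suffices; on $\grc$ they invoke the pointwise estimate $f(\alp)\ll X(1+X^k\|\alp\|)^{-1}+(X^k\|\alp\|)^{1/2}$ from \cite[Theorem 4.1 and Lemma 6.2]{Vau1997}, multiply by $|w(\alp)|^2$ and the trivial $|u|^2\le Z^2$, and integrate directly. Your dyadic scheme with the Lemma~\ref{L5.1} analogue for $f$ would achieve the same bound at similar cost. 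The moral is that in this lemma all the savings come from $w$; reaching for nontrivial estimates on $F$ is wasted effort.
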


\begin{proof} The argument is similar to the one used to demonstrate the previous lemma.
We again use \cite[Lemma 6.2]{Vau1997}, this time providing the bound
\begin{align} \label{5.6}
\int_\grK  |w(\alp)|^2 \d\alp &\ll Y^2 \int_{Q/X^k}^{1/2} (1+Y^k\alp)^{-2}\d\alp
\notag \\ & \ll Y^{2-k} Q^{-1} X^kY^{-k} \ll Y^{2-k} H^{-1+1/k}.
\end{align}
The trivial bound for $F(\alp)|u(\alp)|^2$ now implies that
\begin{equation} \label{5.7}
\int_\grK F|wu|^2\d\alp \ll  Y^{2-k} H^{-1+1/k} HXZ^2 \ll XYZ,
\end{equation}
because one has $Y^{1-k}H^{1/k}Z\ll H^{-1+2/k} \ll 1$.

More care is required for the term involving $|f(\alp)|^2$. Here, we split $\grK$ into its 
subsets $\grc$ and $\grK\setminus\grc = \{\alp\in[0,1]: \|\alp\|>CXY^{-k}\}$. The argument leading to (\ref{5.6}) yields 
$$ \int_{\grK\setminus\grc} |w(\alp)|^2\d\alp \ll Y^{2-k} X^{-1}, $$
so that a trivial bound for $|f(\alp)u(\alp)|^2$ provides the estimate
\begin{equation}
\label{5.8}\int_{\grK\setminus\grc} |fwu|^2\d\alp \ll XY^{2-k}Z^2 \ll XYZ.
\end{equation}

It remains to examine the contribution from $\grc$. For $\alp\in\grc$ we deduce from 
\cite[Theorem 4.1 and Lemma 6.2]{Vau1997} that
$$ f(\alp) \ll X(1+X^k\|\alp\|)^{-1} + (X^k\|\alp\|)^{1/2}, $$
and hence,
$$  |f(\alp)w(\alp)|^2 \ll X^2Y^2(1+X^k\|\alp\|)^{-2} + 
X^kY^2\|\alp\| (1+Y^k\|\alp\|)^{-2}. $$
Since $\|\alp\| \ge QX^{-k}\gg H^{1-1/k}Y^{-k}$, the previous bound implies that
$$ |f(\alp)w(\alp)|^2 \ll X^2Y^2Q^{-1} (1+X^k\|\alp\|)^{-1} + 
X^kY^{2-2k}\|\alp\|^{-1} . $$
By applying a trivial bound for $u(\alp)$, we may conclude that
\begin{align}\label{5.9}
\int_\grc |fwu|^2\d\alp &\ll Z^2 ( X^{2-k+\eps} Y^2 Q^{-1} + X^{k+\eps}Y^{2-2k})
\notag\\ &\ll XYZ(H^{2/k} X^{\eps-1}+  H^{-2+1/k}).
\end{align}
The lemma now follows from (\ref{5.7}), (\ref{5.8}) and (\ref{5.9}).
\end{proof}

We are ready to assemble the puzzle. By combining Lemmata \ref{L5.2}, \ref{L5.3} and 
\ref{L5.4}, we find from (\ref{5.1}) that
$$ \Ups_\grm \le \int_0^1 (F-|f|^2)|wu|^2 \d\alp + O(X^{1+\eps} YZ).$$
By applying orthogonality and reversing the transformation $h=x_1-x_2$ and $x=x_2$ 
within (\ref{4.3}), one finds that the main term here is a weighted count of the integral 
solutions of the equation
$$x_1^k-x_2^k=m_1-m_2+z_1-z_2,$$ 
with $X<x_i\le 2X$, $Y^k<m_i\le (2Y)^k$ and $1\le z_i\le Z$ $(i=1,2)$, subject to the 
condition $|x_1-x_2|>H$. For 
each such putative solution, one has
$$|x_1^k-x_2^k|\ge kHX^{k-1}>(2Y)^k+Z>|m_1-m_2+z_1-z_2|,$$
whence one infers that in fact no solutions exist. Thus we conclude that the contribution 
of $F$ to $\Ups_\grm$ annihilates the anti-contribution of $|f|^2$, implying that
$\Ups_\grm \ll X^{1+\eps}YZ$. By combining this estimate with (\ref{3.11}) and 
(\ref{2.9}), we arrive at the bound
\begin{equation}\label{5.Z}  \Ups(N,Z)\ll X^{1+\eps}YZ.
\end{equation}

\section{Deduction of the main results}

\begin{proof}[The proof of Theorem {\rm \ref{theorem1.1}}] Recall that 
$\phi_k=(1-1/k)^2$, and that $E_k(N,Z)$ denotes the number of integers $n$ with 
$N<n\le 2N$ for which the interval $(n,n+Z]$ contains no integer that is the sum of two 
positive integral $k$th powers. For the latter integers $n$, one has $r(n;Z)=0$. Therefore, 
when $N^{\tet_k}\le Z\le 2k^2N^{\phi_k}$, it follows from (\ref{2.5}) and (\ref{5.Z}) 
that
$$ E_k(N,Z) \left(k^{-1}N^{-1+1/k} YZ\right)^2 \le \Ups(N,Z)\ll X^{1+\eps}YZ, $$
whence 
$$ E_k(N,Z) \ll N^{2-2/k} X^{1+\eps} (YZ)^{-1} \ll N^{1+\tet_k+\eps}Z^{-1}. $$
When $Z>2k^2N^{\phi_k}$, meanwhile, it follows via the greedy algorithm that 
$E_k(N,Z)=0$ for large $N$. This completes the proof of Theorem \ref{theorem1.1}.
\end{proof}

\begin{proof}[The proof of Theorem {\rm \ref{theorem1.2}}] Within this proof we 
abbreviate $s_{k,n}$ to $s_n$. For large $N$, it follows from (\ref{1.4a}) that whenever 
$s_{n+1}\le N$, then $s_{n+1}-s_n\le k^2N^{\phi_k}$. This shows that $E_k(N,Z) =0$ 
whenever $Z >2k^2N^{\phi_k}$. Let
$$\Xi (N,Z) = \text{card}\{N/2<s_n\le N: Z/2 < s_{n+1}-s_n\le Z\},$$
and put $Z_0= 4k^2N^{\phi_k}$. Then we have $\Xi(N,Z) =0$ for $Z>Z_0$. Also, when 
$Z$ is an even integer with $4\le Z\le Z_0$ and $s_{n+1}-s_n>Z$, then each of the 
intervals $(s_n+m-1,s_n+m+Z/2)$ $(1\le m\le Z/2)$ contains no sum 
of two positive integral $k$-th powers. Hence
$$ E_k(N,Z/2) \ge (Z/2)\Xi(N,2Z), $$
and therefore, we deduce from Theorem \ref{theorem1.1} that
$$ \Xi(N,2Z) \ll Z^{-1} E_k(N,Z/2) \ll N^{1+\tet_k+\eps}Z^{-2}. $$
We now conclude that
$$ \sum_{N/2<s_n\le N} (s_{n+1}-s_n)^2 \ll \sum_{\substack{j=0\\ 2^j\le Z_0}}^\infty
(2^{-j}Z_0)^2 E_k(N,2^{-j}Z_0) \ll N^{1+\tet_k+2\eps}.
$$
On summing over dyadic intervals, the conclusion of Theorem \ref{theorem1.2} follows.
\end{proof}

\bibliographystyle{amsbracket}
\providecommand{\bysame}{\leavevmode\hbox to3em{\hrulefill}\thinspace}

\end{document}